\theoremstyle{plain}
\newtheorem{theorem}{Theorem}[section]
\newtheorem{proposition}[theorem]{Proposition}
\newtheorem{corollary}[theorem]{Corollary}
\newtheorem{lemma}[theorem]{Lemma}
\newtheorem{remark}[theorem]{Remark}
\newtheorem{definition}[theorem]{Definition}
\newcommand{\E}{\mathbb{E}}
\renewcommand{\P}{\mathbb{P}}
\newcommand{\Z}{\mathbb{Z}}
\newcommand{\N}{\mathbb{N}}
\newcommand{\R}{\mathbb{R}}
\newcommand{\T}{\mathcal{T}}
\renewcommand{\d}{\mathrm{d}}
\newcommand{\gw}{\mathbf{\mathsf{GW}}}
\newcommand{\rgw}{\mathbf{\mathsf{RGW}}}
\newcommand{\rc}{\mathbf{\mathsf{R}}}
\newcommand{\indicator}[1]{\mathds{1}_{#1}}
\renewcommand{\r}{\mathsf{r}}
\title{Rotor-routing on Galton-Watson trees}
\author {Wilfried Huss\footnote{Cornell University, USA; \texttt{huss@math.tugraz.at}}, Sebastian Müller\footnote{University Aix-Marseille, France; \texttt{sebastian.muller@univ-amu.fr}}
and Ecaterina Sava-Huss\footnote{Cornell University, USA; \texttt{sava-huss@cornell.edu}}}
\begin{document}

\AtEndEnvironment{thebibliography}{
% all your extra bibitems go here
\bibitem[LP15]{LP:book}
R. Lyons {\rm with} Y. Peres,
\newblock Probability on Trees and Networks.
\newblock \emph{Cambridge University Press}, (2015).
\newblock In preparation.  Current
  version available at \hfill\break
  \url{http://pages.iu.edu/\string~rdlyons/}.
}

\maketitle

\begin{abstract}
A rotor-router walk on a graph is a deterministic process, in which each vertex is endowed with a rotor
that points to one of the neighbors. A particle located at some vertex first rotates the rotor in a prescribed order, and then it is routed to 
the neighbor the rotor is now pointing at. In the current work we make a step toward in understanding the behavior of rotor-router walks on random trees. 
More precisely, we consider random i.i.d.~initial configurations of rotors on Galton-Watson trees $\T$, i.e.~on a family
tree arising from a Galton-Watson process, and give a classification in recurrence and transience for
rotor-router walks on these trees. 
 
\end{abstract}

\textbf{Keywords:} Galton-Watson trees, rotor-router walk, recurrence, transience,
return \\ probability.

\textbf{Mathematics Subject Classification:} 60J80; 05C81; 05C05.

\section{Introduction}

A rotor-router walk on a graph is a deterministic process in which the exits from each vertex follow a prescribed periodic sequence. For an overview and other properties, see the expository paper \cite{chip_rotor_2008}.
Rotor-router walks capture in many aspects the expected behavior of simple random walks, but with significantly reduced fluctuations compared to a typical random walk trajectory;
for more details see \cite{cooper_spencer_2006,
friedrich_levine, holroyd_propp}. 
However, this similarity breaks down when one looks at recurrence or transience of the walks, where the rotor-router walk may behave differently than the corresponding random walk.
When referring to rotor-router walks, we shall sometimes use only the shorter name \textit{rotor walks}.

A key result of Schramm states that for any choice of the initial rotor configuration, the rotor-router walk is in a certain sense no more transient than the random walk.
A proof of this result is presented in \cite[Theorem 10]{holroyd_propp}. The other direction is more sensitive, since it depends on the choice of the
initial configuration. There exist graphs where the simple random walk is transient, but the rotor-router walk is still recurrent.
We say that a rotor-router walk which started at the origin with initial rotor configuration $\r$ is \textit{recurrent} if it 
returns to the origin infinitely many times. Otherwise we say that the rotor-router walk is \textit{transient} (or the rotor configuration $\r$ is transient).
In \cite{angel_holroyd_2012} it is shown that for any $d\geq 1$ and any initial rotor mechanism (cyclic order in which the neighbors are served) on $\Z^d$, there exists a
recurrent rotor configuration (which send particles initially back toward the origin); on $\Z^2$ an example of an initial rotor configuration
for which the rotor-router walk is recurrent was given in \cite[Theorem 5]{holroyd_propp}. See also \cite{landau_levine_2009} for homogeneous trees 
and \cite{florescu_ganulgy_levine_peres_2014} for initial rotor configurations with all rotors aligned on $\mathbb{Z}^d$. 
On general trees, the issue of transience and recurrence was studied in detail in \cite{angel_holroyd_2011}. An extension 
for random initial configuration of rotors was made in \cite{huss_sava_transience_dircovers} on directed covers of graphs.

In this note we investigate the recurrence and transience properties of rotor-router walks with random initial rotor configuration $\rho$ on Galton-Watson trees.
The main result Theorem \ref{main_thm} gives a criterion for 
recurrence and transience of rotor-router walks. Moreover, if we run $n$ rotor walks starting from the root and record whether each walk returns to the root or escapes to infinity,
we show that in the transient regime  the relative density of escapes of the rotor-router walk
 equals almost surely the  return probability of the simple random walk on Galton-Watson trees. 
If $\T$ is the family tree of a Galton-Watson process, and $\gw$ the law of the family tree, then as a consequence of the main result we get the following.

\begin{theorem}
\label{main_corollary}
 Let $\rho$ be a uniformly distributed rotor configuration on a Galton-Watson tree $\T$ with mean offspring number $m$.
 Then for $\gw$-almost all trees $\T$, the rotor-router walk on $\T$ is recurrent if and only if $m\leq 2$. 
\end{theorem}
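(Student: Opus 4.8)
The plan is to deduce Theorem~\ref{main_corollary} from the general criterion of Theorem~\ref{main_thm} by specialising it to the uniform rotor distribution. I expect Theorem~\ref{main_thm} to characterise transience through the survival of an auxiliary branching process that records, at each vertex, the escape directions left open by the initial rotor. So I would first make the per-vertex contribution explicit. At a vertex $v$ with $k$ children the rotor cycles through its $k+1$ neighbours, and the decisive feature is the position of the parent-edge in this cyclic order relative to the initial rotor. Tracking excursions into the subtree at $v$, one finds that the number of returns-to-parent before an escape satisfies a recursion of the form $R_v=\min_j R_{c_j}+\varepsilon_v$, where $\varepsilon_v\in\{0,1\}$ is $0$ when the escaping child is served before the parent and $1$ otherwise. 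Unrolling this gives transient $\iff$ the walk can be pushed along some ray $\xi$ with $\sum_{v\in\xi}\varepsilon_v<\infty$, i.e. a ray carrying only finitely many unit costs.

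Second, I would identify the ``zero-cost'' (usable-escape) subtree as a Galton--Watson process and compute its mean. Since the rotors are i.i.d.\ and $\T$ is recursively self-similar, the children of $v$ served before the parent each root an independent copy of the whole structure, so the usable-escape subtree is a $\gw$-type process whose offspring is the number of children preceding the parent in the initial rotor order. For the \emph{uniform} configuration the cyclic rank of the parent-edge is uniform on $\{1,\dots,k+1\}$, hence the number of children served before it is uniform on $\{0,1,\dots,k\}$ with mean $k/2$. Averaging over the offspring law yields mean $m/2$ for the auxiliary process.

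Third, I would invoke the survival dichotomy: the auxiliary process is supercritical, critical, or subcritical according as $m/2$ is $>1$, $=1$, or $<1$. When $m>2$ it survives with positive probability, and by Theorem~\ref{main_thm} a surviving escape subtree forces transience; when $m\le 2$ it dies out almost surely, no ray of finite total cost exists, and the walk is recurrent. The threshold $m=2$ falls on the recurrent side precisely because a \emph{critical} branching process becomes extinct almost surely, so at $m=2$ there is a.s.\ no usable escape ray.

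The main obstacle is twofold. First, pinning down the constant $1/2$: one must verify that ``served before the parent'' is exactly the event contributing a genuine escape direction, so that the auxiliary mean is $m/2$ rather than $m$; this $1/2$ is the rotor ``tax'' — one forced return to the parent per cycle — and it is what separates the rotor threshold $m=2$ from the simple-random-walk threshold $m=1$. Second, upgrading these positive-probability statements to the claimed $\gw$-almost-sure dichotomy: I would read the equivalence on the survival event (on extinction $\T$ is finite and the walk is trivially recurrent), and combine a zero--one law for the i.i.d.\ rotor randomness with the inherited branching structure to conclude that, for $\gw$-almost every infinite tree, the walk is almost surely transient for $m>2$ and recurrent for $m\le 2$. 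The critical case $m=2$ is the most delicate, since it demands the a.s.\ extinction of a critical auxiliary process rather than a crude first-moment estimate.
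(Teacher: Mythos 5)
Your proposal is correct and takes essentially the same route as the paper: the corollary is obtained by specialising Theorem~\ref{main_thm} to the uniform rotor distribution, where the number of ``good'' children (those served before the parent) at a vertex with $k$ children is uniform on $\{0,\ldots,k\}$ with mean $k/2$, so $\E[\nu]=m/2$ and the recurrence criterion $\E[\nu]\le 1$ becomes $m\le 2$. The additional machinery you sketch (the excursion recursion, the zero--one law, the separate treatment of the critical case) is already packaged inside Theorem~\ref{main_thm}, whose statement places $\E[\nu]=1$ on the recurrent side, so the paper's entire proof reduces to the one-line computation $\E[\nu]=\tfrac{1}{2}\E[\xi]$.
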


\section{Preliminaries}
\subsection{Galton-Watson trees}

Let $\left(Z_h\right)_{h\in\N_0}$ be a Galton-Watson process  with offspring distribution $\xi$ given by $p_k = \P[\xi = k]$ for $k\in\N_0$. Throughout the paper we assume $p_0=0$; this assumption is made for presentational reasons.
Informally, a Galton-Watson process is defined as follows: we start with one particle $Z_0=1$, which has $k$ children with probability $p_k$. Then each of these children also have children with the same offspring distribution, independently of each other and of their parent. This continues forever. 
Formally, let $\left(\xi^h_i\right)_{i,h\in\N}$ be i.i.d.~random variables with
the same distribution as $\xi$. 
Define $Z_h$ to be the size of the $h$-th generation, that is,
\begin{equation*}
Z_{h+1} = \sum_{i=1}^{Z_h} \xi_i^h,
\end{equation*}
and $Z_0 = 1$. Our assumption $p_0 = 0$ means that each vertex has at least one child, and the process survives almost surely. That is
we have that \textit{the mean offspring number $m=\E[\xi]\geq1$}.
Here and thereafter we denote by  $\E[\xi]=\sum_{k\geq 1} k \xi_{k}$ for a stochastic vector $\xi$.
Observe hereby that a supercritical Galton-Watson process conditioned on survival is a Galton-Watson process with $p_{0}=0$ plus some finite bushes. The existence of finite bushes does not influence recurrence and transience properties. 

We will not be interested only in the size $Z_h$ of the $h$-th generation, but also in the underlying family trees. Let $\T$ be the family tree of this Galton-Watson process, with vertex set
$V(\T) = \big\{x_i^h: h\in\N_0,\, 1\leq i \leq Z_h\big\}$. 
If $x_j^{h+1}$ is a descendant of $x_i^h$
there is an edge in the family tree between these two vertices. For ease of notation we will always
identify $\T$ with its vertex set. Denote by $\gw$ the law of the family trees of
our Galton-Watson process. For technical reasons we will add one additional vertex $s$
to the tree, which will act as the parent of the root vertex $o = x_0^1$. 
The vertex $s$ is called \textit{the sink} of the tree. 
We always consider $\T$ together with its natural planar embedding, that
is for each generation $h$ we draw the vertices $x_i^h$ from right to left, for $i=1,\ldots,Z_h$.

For each vertex $x\in \T$, denote by $\d_x$ the (random) number of children of $x$. Given the planar embedding described above, we denote by $x^{(k)}$, $k=0,\ldots,\d_x$,
the neighbors of $x$ in $\T$ in counterclockwise order beginning at the parent $x^{(0)}$ of $x$.

Throughout the paper we use the notation $\T$ for $\gw$-distributed random trees,
while $T$ will be used for fixed trees. Similarly we use $\r$ for fixed rotor configurations and $\rho$ for the random rotor configurations on $\T$.

\subsection{Rotor-router walks}

Let $T$ be an infinite tree with root (or origin) $o$. 
 A rotor configuration $\r$ on $T$
is a map $\r:T \to \N_0$ such that $\r(x) \in \{0,\ldots, \d_x\}$ for all vertices $x\in T$. For a given
rotor configuration $\r_0$ and a starting vertex $x_0\in T$, a rotor-router walk is a sequence of pairs
$\big\{(x_i,\r_i)\big\}_{i\geq 0}$ such that for all $i \geq 1$ we have the transition rule
\begin{equation*}
\r_{i+1}(x) = 
\begin{cases}
(\r_i(x_i) + 1) \mod (\d_{x_i}+1),  \quad &\text{if } x=x_i\\
\r_i(x), &\text{otherwise},
\end{cases}
\end{equation*}
and $x_{i+1} = x_i^{(\r_{i+1}(x_i))}$. Informally this means that a particle performing a rotor-router walk, when reaching the vertex $x$ first
increments the rotor at $x$ and then moves to the neighbour of $x$ the  rotor is now pointing at.

Depending on the initial rotor configuration $\r$, the rotor-router walk can exhibit one of the
following two behaviors: either the walk eventually returns to the origin
or the walk never returns to the origin and visits each vertex only finitely many times; see e.g. \cite[Lemma 6]{holroyd_propp}.
If any vertex would be visited infinitely often, then each of its neighbors must be visited infinitely often, and so the origin
itself would be visited infinitely often and we are in the \textit{recurrent case} of a rotor-router walk. In the second case,
when each vertex is visited only finitely many times, we are in the \textit{transient case}, and we say that the rotor-router walk
\textit{escapes to infinity}. Here, the rotor-router walk leaves behind a well defined limit rotor configuration.
In the transient case, we want also to quantify \textit{how transient} an initial configuration $\r$ for a rotor-router walk can be.
In order to do so, we start with $n$ rotor particles at the origin of $T$ and let each of them perform a rotor-router walk
until either it returns to the origin or it escapes to infinity. More formally, let $\r_1 = \r$ be a fixed initial rotor configuration, 
and denote by $\r_n$ the initial rotor
configuration of the $n$-th particle. For all $n\geq 1$, run the $n$-th
rotor-router particle until it returns to $o$ for the first time. If this occurs after a finite number of
steps, let $\r_{n+1}$ be the rotor configuration left behind by this particle. In case the $n$-th particle never
returns to $o$ we define $\r_{n+1}$ to be the limit configuration created by the $n$-th particle escaping to infinity.
For $k=1,\ldots,n$, let 
\begin{equation*}
e_k = \begin{cases}
  1,\quad&\text{if the $n$-th particle escapes to infinity} \\
  0,     &\text{otherwise},
\end{cases}
\end{equation*}
and let $E_n(T,\r) = \sum_{k=1}^n e_k$ count the number of escapes of the first $n$ walks.

The next result, due to Schramm states that a rotor-router walk is no more transient than a random walk.
A proof of this result can be found in \cite[Theorem 10]{holroyd_propp}.

\begin{theorem}\label{thm:schramm_thm}
For any locally finite graph $G$, any starting vertex, any cyclic order of neighbors and any initial rotor configuration $\r$
\begin{equation*}
\limsup_{n\to\infty}\frac{E_n(G,\r)}{n} \leq \gamma(G),
\end{equation*}
where $\gamma(G)$ represents the probability that the simple random walk on the graph $G$ never returns to the starting vertex.
\end{theorem}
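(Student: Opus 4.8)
The plan is to exhaust $G$ by finite connected sets and to compare, on each finite piece, the rotor flow with the harmonic flow of the simple random walk. Fix a finite connected $W$ with $o\in W$ and let $\partial W$ be its outer vertex boundary. Let $\gamma_W$ be the probability that the simple random walk from $o$ reaches $\partial W$ before returning to $o$. Since never returning to $o$ is the decreasing intersection, over all such $W$, of the events ``reach $\partial W$ before returning to $o$'', one has $\gamma_W\downarrow\gamma(G)$ as $W\uparrow G$. It therefore suffices to show $\limsup_n E_n(G,\r)/n\le\gamma_W$ for each fixed $W$ and then let $W\uparrow G$.

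First I would introduce the rotor walk \emph{stopped} at the boundary: run the $n$ particles as in the definition of $E_n$, but absorb a particle as soon as it reaches $\partial W$ (in addition to absorbing it when it returns to $o$), and let $E_n^W$ count those absorbed on $\partial W$. By the abelian property of rotor walks with sinks, running the full process coincides with first running this stopped process and then releasing the boundary-absorbed particles to continue; as every particle that escapes to infinity must first reach $\partial W$, the escapers form a subset of the boundary-absorbed ones, so $E_n(G,\r)\le E_n^W$. Hence it is enough to prove $E_n^W=n\gamma_W+O(1)$ with an error bounded uniformly in $n$ and in $\r$.

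This last estimate is the heart of the matter. Let $f$ be harmonic on $W\setminus\{o\}$ with $f(o)=0$ and $f\equiv 1$ on $\partial W$, so that $\gamma_W=\deg(o)^{-1}\sum_{y\sim o}f(y)$. In the stopped process write $u(x,y)$ for the number of crossings of the directed edge $(x,y)$ and $u(x)=\sum_y u(x,y)$ for the number of exits from $x$. Summing the weighted flow $u(x,y)\bigl(f(y)-f(x)\bigr)$ over all directed edges and regrouping by endpoints, flow conservation (in-flow equals out-flow at interior vertices) together with $f(o)=0$ and $f\equiv1$ on $\partial W$ gives
\[
\sum_{(x,y)}u(x,y)\bigl(f(y)-f(x)\bigr)=E_n^W.
\]
On the other hand the rotor mechanism shares the $u(x)$ exits from $x$ among its neighbours to within one of the even split, so $u(x,y)=u(x)/\deg(x)+\varepsilon(x,y)$ with $|\varepsilon(x,y)|\le1$ and $\sum_y\varepsilon(x,y)=0$. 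Substituting, the principal term equals $\sum_x u(x)\bigl(\deg(x)^{-1}\sum_{y\sim x}f(y)-f(x)\bigr)$, which vanishes at every interior vertex by harmonicity and at $o$ equals $u(o)\gamma_W=n\gamma_W$ (using $u(o)=n$), while the boundary vertices contribute nothing since no particle exits them. The leftover $\sum_{(x,y)}\varepsilon(x,y)\bigl(f(y)-f(x)\bigr)$ is bounded in absolute value by the number of directed edges inside $W\cup\partial W$, because $|\varepsilon|\le1$ and $0\le f\le1$; call this constant $C_W$, which depends only on $W$.

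Combining the two evaluations gives $|E_n^W-n\gamma_W|\le C_W$, so $E_n^W/n\to\gamma_W$ and thus $\limsup_n E_n(G,\r)/n\le\gamma_W$; letting $W\uparrow G$ yields $\limsup_n E_n(G,\r)/n\le\gamma(G)$. I expect the main obstacle to be precisely this discrepancy step: showing that the bounded-share property of the rotors, fed through flow conservation and the harmonicity of $f$, forces the rotor escape count to track the random-walk prediction $n\gamma_W$ with an error that stays bounded as $n\to\infty$. The abelian comparison with the stopped process is the secondary point that must be handled with care.
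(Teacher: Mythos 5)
The paper does not actually prove this statement: it is Schramm's theorem, and the text simply defers to \cite[Theorem 10]{holroyd_propp}. Your argument is a correct reconstruction of essentially that cited proof. The three ingredients --- the exhaustion by finite sets $W$ with $\gamma_W\downarrow\gamma(G)$, the abelian reduction $E_n(G,\r)\le E_n^W$ to the process stopped on $\partial W$, and the harmonic-weighting computation giving $\lvert E_n^W-n\gamma_W\rvert\le C_W$ --- are exactly the ingredients of Holroyd and Propp's argument; indeed your discrepancy estimate is precisely their Theorem~1, which this paper quotes separately in Section~3 as the bound $\lvert h(o)-n_s/n\rvert\le K/n$ with $K$ controlled by the total variation of the harmonic function over edges. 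The flow computation itself is sound: conservation at interior vertices plus $f(o)=0$, $f\equiv 1$ on $\partial W$ yields $\sum_{(x,y)}u(x,y)\bigl(f(y)-f(x)\bigr)=E_n^W$, the even-split property of rotors ($\lvert u(x,y)-u(x)/\deg(x)\rvert\le 1$) combined with harmonicity isolates the principal term $u(o)\gamma_W=n\gamma_W$, and the remainder is bounded by the number of directed edges meeting $W$, uniformly in $n$ and $\r$.

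The one step that genuinely requires the care you flag is the inequality $E_n(G,\r)\le E_n^W$. On an infinite graph the abelian property for particles that may escape to infinity is not automatic (the finite-graph version, quoted in this paper from \cite{angel_holroyd_2011}, does not directly apply), and the standard fix is the truncation machinery of \cite[Lemmas 18, 19]{holroyd_propp} --- the same lemmas this paper invokes in the proof of Proposition~\ref{cor:l_delta} to reduce a statement about escapes to a statement about finite truncations. With that reference supplied, your proof is complete and matches the source the paper relies on.
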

This result suggests the following question: when does $\lim_{n}\frac{E_n(G,\r)}{n}$ exist, and when is it equal to $\gamma(G)$?
For general  state spaces $G$, the answer depends both on $G$ and on the initial configuration of rotors.
We give an answer to this question for rotor-router walks on Galton-Watson trees $\T$, that is we prove that the limit 
$\lim_{n}\frac{E_n(\T,\rho)}{n}$ exists and equals $\gamma(\T)$ almost surely,
for a random initial rotor configuration $\rho$.
This is the statement of the Theorem \ref{main_thm}.

\paragraph{Abelian property.} Rotor-router walks possess a number of interesting properties, one of which will be used several times during the current work.
This is the \textit{Abelian property}, e.g. \cite[Proposition 4.1]{diaconis_fulton_1991}, which allows several rotor-router walks to walk ``simultaneously'' instead of ``successively''. To make this precise we follow \cite{angel_holroyd_2011}. 

Let $T$ be a finite tree  with root $o$ connected to an additional vertex $s$ (the parent of the root). Furthermore, let $S$, with $s\notin S$, be a non-empty set of the leaves of $T$, and $\r$ be an initial rotor configuration on $T$. Suppose that at each vertex there is some nonnegative number of particles.
At each step of the process we choose a vertex $v\notin S\cup\{s\}$ at which there is at least one particle (if such a vertex exists) and perform one step of a rotor-router walk step with this particle. Such an operation is called a \textit{legal move}. A \textit{legal sequence} is a sequence of legal moves.
A \textit{complete legal sequence} is a legal sequence, such that at the end
of the sequence no further legal moves are possible.

\begin{lemma}[Lemma 24,\cite{angel_holroyd_2011}]
Consider the setting of the last paragraph.
%Let $T$ be a finite tree with root $o$ connected to an additional vertex $s$ (the parent of the root). Let $S$, with $s\notin S$ a non-empty set of the leaves
%of $T$, and let $\r$ be an initial rotor configuration on $T$.
If we start with $n$ particles at $o$, and perform any complete legal sequence, then the process terminates in a finite number of steps, and the number of particles in $S$ is exactly $E_n(T,\r)$.
\end{lemma}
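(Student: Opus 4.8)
The plan is to establish two separate facts: that every complete legal sequence is finite, and that every complete legal sequence leaves exactly $E_n(T,\r)$ particles in $S$. For the second statement the heart of the matter is an \emph{abelian property}, namely that the number of times each vertex is fired does not depend on the order in which legal moves are performed. I would encode this through the \emph{odometer}, the function $u\colon T\setminus(S\cup\{s\})\to\N_0$ recording how often each vertex has been fired along a given legal sequence, and reduce everything to a monotonicity (``least action'') argument.

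First I would prove termination by a conservation-of-mass argument. The total number of particles on $T\cup\{s\}$ is invariant under legal moves and equals $n$, while particles that reach $S\cup\{s\}$ are never moved again. Suppose, for contradiction, that some legal sequence is infinite. Since $T$ is finite, the set $U$ of vertices fired infinitely often is nonempty. A vertex fired infinitely often sends, by the cyclic rotor rule, infinitely many particles to \emph{each} of its neighbours. If such a neighbour lies in $S\cup\{s\}$, then infinitely many particles accumulate there, contradicting the bound $n$; if it lies in $T\setminus(S\cup\{s\})$, it must itself be fired infinitely often, since otherwise its particle count would grow without bound. Hence $U$ is closed under taking neighbours, and connectedness of $T$ forces $U$ to reach a vertex adjacent to $S$ or to $s$ (for instance the parent of any leaf in $S$, or the root $o$, which is adjacent to $s$), yielding the desired contradiction.

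The main step, and the one I expect to be the obstacle, is order-independence of the odometer. Given a legal sequence with odometer $u$, the number of particles currently at an interior vertex $x$ can be written as
\begin{equation*}
N_0(x)-u(x)+\sum_{y\sim x} f_{y\to x}\big(u(y)\big),
\end{equation*}
where $N_0$ is the initial particle count and $f_{y\to x}(k)$ counts how many of the first $k$ firings of $y$ are routed to $x$; the key structural fact is that each $f_{y\to x}$ is nondecreasing. I would then show that if $u$ is the odometer of an arbitrary legal sequence and $v$ that of a \emph{complete} one, then $u\le v$ coordinatewise. Assuming not, consider the first move of the first sequence that pushes some coordinate above $v$: at that moment the truncated odometer $u'$ satisfies $u'\le v$ everywhere and $u'(x)=v(x)$ for the fired vertex $x$, yet $x$ carries a particle. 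Substituting $u'\le v$ and $u'(x)=v(x)$ into the displayed identity and using monotonicity of the $f_{y\to x}$ shows that $x$ also carries a particle in the final configuration of the complete sequence; but an interior vertex with a particle admits a legal move, contradicting completeness. Applying this inequality in both directions to two complete legal sequences gives equal odometers, and hence identical final configurations.

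Finally, order-independence yields the count. The successive procedure defining $E_n(T,\r)$ — releasing the $n$ particles one at a time from $o$ and running each until it is absorbed at $s$ (a return to the origin) or at $S$ (an escape) — is itself a particular complete legal sequence, in which the number of particles ending in $S$ equals $E_n(T,\r)$ by definition. By the abelian property this number is common to all complete legal sequences, which is the claim. The point requiring the most care is the bookkeeping in the particle-balance identity above, together with the verification that each $f_{y\to x}$ is monotone in the rotor's cyclic order; this is where the argument must be made fully rigorous.
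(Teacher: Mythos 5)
The paper does not actually prove this lemma --- it is quoted from Angel and Holroyd \cite{angel_holroyd_2011} (their Lemma 24) without argument --- so there is no in-paper proof to compare yours against; I can only assess your proof on its own. It is correct and is essentially the standard argument. The termination step is sound: a vertex fired infinitely often feeds each of its neighbours infinitely many particles, so either a vertex of $S\cup\{s\}$ accumulates more than $n$ particles or the set of infinitely-fired vertices swallows the whole interior, including $o$, and then floods $s$; either way the conservation of the $n$ particles is violated. The abelian step is the usual least-action principle: the balance identity $N(x)=N_0(x)-u(x)+\sum_{y\sim x}f_{y\to x}(u(y))$ together with monotonicity of the routing counts $f_{y\to x}$ (each increment of $u(y)$ raises $f_{y\to x}(u(y))$ by $0$ or $1$) gives $u\le v$ for any legal odometer $u$ against a complete one $v$, hence equality of all complete odometers and of the resulting configurations. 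Do say explicitly that the $f_{y\to x}$ are the same functions for both sequences because both start from the same rotor configuration $\r$. One point deserves a flag: your final step identifies the sequential procedure defining $E_n(T,\r)$ with a complete legal sequence, which requires reading ``the particle returns to the origin'' as ``the particle is absorbed at the sink $s$''. That is the convention of \cite{angel_holroyd_2011}, and the one under which the lemma is true; but this paper's own definition says ``until it returns to $o$ for the first time'', and under that literal reading the sequential procedure retires particles at the interior vertex $o$, is \emph{not} a complete legal sequence, and can produce a different count (already on the path $s-o-a-b$ with $S=\{b\}$, two particles, and rotors $\r(o)=0$, $\r(a)=1$, the literal reading gives $E_2=0$ while every complete legal sequence leaves one particle at $b$). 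You resolved the ambiguity the right way, but the resolution should be stated, not assumed.
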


\section{Random initial rotor configuration}
We construct random initial rotor configurations on Galton-Watson trees $\T$.
In order to do this, for each $k\geq 0$ we choose a probability
distribution $\mathcal{Q}_k$ supported on $\{0,\ldots,k\}$. That is, we have the sequence of distributions $(\mathcal{Q}_k)_{k\in\N_0}$, where
\begin{equation*}
\mathcal{Q}_k = \big(q_{k,j}\big)_{0\leq j\leq k}
\end{equation*}
with $q_{k,j}\geq 0$ and $\sum_{j=0}^k q_{k,j} = 1$. Let $\mathcal{Q}$ be the infinite lower
triangular matrix having $\mathcal{Q}_k$ as row vectors, i.e.:
\begin{equation*}
\mathcal{Q} = 
\begin{pmatrix}
q_{00} & 0      & 0      & 0      & \hdots \\
q_{10} & q_{11} & 0      & 0      & \hdots \\
q_{20} & q_{21} & q_{22} & 0      & \hdots \\
q_{30} & q_{31} & q_{32} & q_{33} & \hdots \\
\vdots & \vdots & \vdots & \vdots & \ddots \\
\end{pmatrix}.
\end{equation*}

\begin{definition}
A random rotor configuration $\rho$ on a tree $T$ is $\mathcal{Q}$-distributed,
if  for each $x\in T$, the rotor $\rho(x)$ is a random variable with the following properties:
\begin{enumerate}
\item $\rho(x)$ is $\mathcal{Q}_{\d_x}$ distributed, i.e.,
$\P[\rho(x) = \d_x - l \,|\, \d_v = k] = q_{k,l}$, with $l=0,\ldots \d_x$,
\item $\rho(x)$ and $\rho(y)$ are independent if $x\not=y$, with $x,y\in T$.
\end{enumerate}
We write $\rc_{T}$ for the corresponding probability measure.
\end{definition}
Then $\rgw=\rc_{\T}\times \gw$  represents 
the probability measure given by choosing a tree $\T$ according to the $\gw$ measure, and then independently
choosing a rotor configuration $\rho$ on $\T$ according to $\rc_{\T}$. 
Recall that to the root $o\in\T$, we have added an additional vertex $s$, which should be considered as the parent of the root.

We are now ready to state our main result.
\begin{theorem}
\label{main_thm}
Let $\rho$ be a random $\mathcal{Q}$-distributed rotor configuration on a Galton-Watson tree $\T$ with offspring distribution $\xi$, and let
$\nu = \xi\cdot\mathcal{Q}$. Then we have  for $\rgw$-almost all~$\T$ and $\rho$:
\begin{enumerate}[(a)]
\item \label{main_thm_a}$\displaystyle E_n(\T,\rho) = 0$ for all $n\geq 1$, if $\E[\nu] \leq 1$,
\item \label{main_thm_b}$\displaystyle\lim_{n\to\infty} \frac{E_n(\T,\rho)}{n} = \gamma(\mathcal{T})$, if $\E[\nu] > 1$,
\end{enumerate}
where $\gamma(\T)$ represents the probability that simple random walk started at the root  of $\T$  never returns  to $s$. 
\end{theorem}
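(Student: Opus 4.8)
The plan is to exploit three facts: the self-similarity of $\T$ under $\gw$, the Abelian property (together with a truncation of $\T$), and Schramm's inequality (Theorem~\ref{thm:schramm_thm}). Let $\ball{h}$ be the finite tree obtained by cutting $\T$ at height $h$ and turning the level-$h$ vertices into an absorbing leaf set $S$. The Abelian property lets me evaluate $E_n$ on $\ball{h}$ by placing $n$ particles at $o$ and running any complete legal sequence, and letting $h\to\infty$ recovers $E_n(\T,\rho)$ monotonically. Schramm's theorem already yields the almost sure upper bound $\limsup_{n} E_n(\T,\rho)/n \le \gamma(\T)$, so the substance of the proof is the matching lower bound in part~(\ref{main_thm_b}) and the exact vanishing in part~(\ref{main_thm_a}).

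The second ingredient is a Galton--Watson process governing the first excursion. For $x\in\T$ put $L_x=\d_x-\rho(x)\in\{0,\dots,\d_x\}$; by the definition of the $\mathcal{Q}$-distribution the variables $L_x$ are independent with $\P[L_x=j]=\nu_j$, so $\nu$ is the law of $L_x$ and $\E[\nu]=\E[L_x]$. When the first particle is launched from $o$, the rotor rule makes it descend into the last $L_x$ children of each vertex $x$ it reaches and backtrack once these are exhausted; that is, it performs a depth-first exploration of the \emph{forward tree} in which every vertex $x$ has exactly $L_x$ children. Consequently the first particle escapes if and only if this forward tree is infinite. Under $\rgw$ the forward tree is a Galton--Watson tree with offspring law $\nu$ and mean $\E[\nu]$, which (in the non-degenerate case) is finite almost surely when $\E[\nu]\le 1$ and infinite with positive probability when $\E[\nu]>1$. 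This is the source of the threshold $\E[\nu]=1$.

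For part~(\ref{main_thm_a}) I must strengthen ``the first particle returns'' to ``$E_n=0$ for all $n$''. The difficulty, which is the first genuine obstacle, is that each completed excursion advances the rotors, so subsequent particles explore cyclically shifted children and the visited region can a priori grow past the initial forward tree. I would handle this through the characterisation of rotor-walk recurrence on trees of Angel--Holroyd~\cite{angel_holroyd_2011} and the recurrence/transience dichotomy of~\cite{holroyd_propp}: recurrence of the walk is equivalent to the almost sure finiteness of a branching process of mean $\E[\nu]$, obtained by dominating the total downward penetration of all excursions by a (sub)critical Galton--Watson tree. For $\E[\nu]\le 1$ this process dies out almost surely, the walk returns to $o$ infinitely often, and hence no particle ever escapes.

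For part~(\ref{main_thm_b}) the upper bound is Schramm, so it remains to prove $\liminf_{n}E_n(\T,\rho)/n\ge\gamma(\T)$ almost surely. The key structural fact is that the asymptotic escape rate satisfies the \emph{same} fixed-point recursion as the simple-random-walk escape probability: because the rotor distributes the particles leaving a vertex of degree $\d_x=d$ almost equally among the $d+1$ neighbours (up to a bounded discrepancy), a conservation-of-particles count gives, for the escape rate $\beta_x$ out of the subtree at $x$ and $B_x=\sum_i\beta_{x^{(i)}}$ over its children, the relation $\beta_x=B_x/(1+B_x)$, which is exactly the recursion obeyed by $\gamma(\T)$ (see~\cite{LP:book}). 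This recursion admits both the zero solution and the simple-random-walk solution, and the heart of part~(\ref{main_thm_b}) --- and the main obstacle of the whole argument --- is to show that in the supercritical regime the dynamics select the latter. My plan is to prove on the truncations $\ball{h}$ that escapes already occur with a frequency bounded below via the survival of the $\nu$-branching process, to propagate this positivity up the recursion, and to obtain existence of $\lim_{n}E_n/n$ and its identification with $\gamma(\T)$ from the ergodic self-similar structure of $\T$ under $\gw$ (for instance through Kingman's subadditive ergodic theorem combined with a concentration estimate controlling the bounded round-robin discrepancies uniformly over vertices).
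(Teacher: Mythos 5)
Your outline gets the threshold and several of the right ingredients, but both parts contain genuine gaps. For part~(\ref{main_thm_a}), the conclusion is correct and your $L_x=\d_x-\rho(x)$ is exactly the paper's count of ``good children'', whose law is $\nu$; however, the mechanism you give --- ``dominating the total downward penetration of all excursions by a (sub)critical Galton--Watson tree'' --- is false as stated: in the recurrent case infinitely many excursions eventually visit \emph{every} vertex, so the union of all excursions is not dominated by any finite tree. What actually closes the argument (and what the paper uses, via Proposition~8 of Angel--Holroyd) is that a particle can escape to infinity \emph{only} along an infinite path of good children present in the \emph{initial} configuration; since the good-children tree is Galton--Watson with mean $\E[\nu]\le 1$ it is a.s.\ finite, hence $E_n(\T,\rho)=0$ for every $n$. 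This is a local repair, but it is the step your writeup misdescribes.

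Part~(\ref{main_thm_b}) has two more serious gaps. First, positivity of $\liminf_n E_n(\T,\rho)/n$: survival of the $\nu$-branching process only gives that a \emph{single} particle escapes with positive probability $\mathsf{p}$; to get a positive \emph{fraction} of the $n$ particles escaping almost surely one needs linearly many independent escape opportunities. The paper manufactures these with the frontier process: after $n$ particles the frontier $F_\rho(n)$ has size at least $\kappa n$ (this uses the positive anchored expansion constant of supercritical Galton--Watson trees, due to Chen and Peres, together with the Holroyd--Propp estimate on the number of particles absorbed at $s$), each frontier vertex roots an untouched independent copy of the tree, and a Chernoff bound finishes. Nothing in your sketch produces this linear lower bound on the number of independent trials. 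Second, identification of the limit: the exact recursion $\beta_x=B_x/(1+B_x)$ you posit for the rotor escape rate is not available --- Angel--Holroyd's Lemma~25 only gives the one-sided inequality $l(T,\r)\ge 1-\bigl(1+\sum_j l_j(T,\r)\bigr)^{-1}$ for the $\liminf$ --- and Kingman's theorem does not apply here, since $n$ indexes particles rather than time and the increments $E_{n+m}-E_n$ are neither stationary nor super/subadditive in a usable way. The paper's route is different and essential: the one-sided inequality translates into $F_l\le\mathcal{K}(F_l)$ for the c.d.f.\ operator $\mathcal{K}$ of Lyons--Pemantle--Peres; since $F_l(0)=0$ (by the positivity step) the iteration $\mathcal{K}^n(F_l)$ converges to $F_\gamma$, whence $F_l\le F_\gamma$, i.e.\ $l(\T,\rho)$ stochastically dominates $\gamma(\T)$; combining this distributional inequality with the \emph{pointwise} Schramm bound $\limsup_n E_n/n\le\gamma(\T)$ forces equality in distribution and then almost sure equality, which simultaneously yields existence of the limit. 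This sandwich between a stochastic domination in one direction and an a.s.\ inequality in the other is the decisive idea of the transient part, and it is absent from your plan.
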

Theorem \ref{main_thm} is a generalization of \cite[Theorem 6]{angel_holroyd_2011}
which holds for regular trees. See also \cite[Theorem 3.5]{huss_sava_transience_dircovers}
for rotor-router walks on periodic trees.

Theorem \ref{main_corollary} is now a corollary of Theorem \ref{main_thm} in the case of uniformly
distributed rotors. More precisely, if $\mathcal{Q}_k$ is the uniform distribution on $\{0,\ldots,k\}$
for all $k$, we get a particularly simple recurrence condition involving only the mean offspring
number $m=\E[\xi]$.

\begin{proof}[Proof of Theorem \ref{main_corollary}]
$\displaystyle\quad
\E[\nu] = \sum_{l=0}^\infty \sum_{k=l}^\infty \frac{1}{k+1} p_k = \frac{1}{2} \sum_{k=0}^\infty k p_k = \frac{1}{2} \E[\xi]$.
\end{proof}

\begin{remark}
Notice here the difference between simple random walk on Galton-Watson trees which is transient for mean offspring number $m\in(1,2]$, while the rotor-router walk with uniformly distributed initial rotors is recurrent.
\end{remark}

\subsection{Recurrent part}

\begin{proof}[Proof of Theorem \ref{main_thm}\eqref{main_thm_a}]
Let $\rho$ be a random $\mathcal{Q}$-distributed rotor configuration on a Galton-Watson tree $\T$. Recall that for a vertex $x\in\T$ we denote by $x^{(k)}$, $k=1,\ldots,
\d_x$, the children of $x$. We call a child $x^{(k)}$ \emph{good} if $\rho(x) < k$. This means that the rotor walk at $x$ will visit the good
children before visiting the parent $x^{(0)}$. Since $\rho$ is $\mathcal{Q}$-distributed,
\begin{equation*}
\P[x \text{ has $l$ good children} \,|\, \d_x = k] = q_{k,l}.
\end{equation*}
Therefore the distribution of the number of good children of a vertex $x$ in $\T$ is given by
\begin{equation*}
\P[x \text{ has $l$ good children}] = \sum_{k = l}^{\infty} p_k q_{k,l},
\end{equation*}
which is the $l^{\text{th}}$ component of the vector $\nu = \xi\cdot\mathcal{Q}$.
Thus, for each vertex $x$ the set of descendants of $x$ which are connected to $x$ by a
path consisting of only good children forms a Galton-Watson tree with offspring distribution
$\nu$. By \cite[Proposition 8]{angel_holroyd_2011} the rotor walk can only escape to
infinity along paths that consist exclusively of good children. By assumption we have $\E[\nu] \leq 1$,
hence subtrees consisting of only good children die out almost surely. This implies that there are no escapes to infinity.
\end{proof}

\subsection{The frontier process}\label{sec:frontier}
To prove the transient part of Theorem \ref{main_thm}, we will use the \emph{frontier
process} introduced in \cite{huss_sava_errata_dircovers}. For sake of completeness we state
the definition of the process here.

Fix an infinite tree $T$ with root $o$ and without leaves and a rotor configuration $\r$ on $T$. As
before we attach an additional vertex $s$ to the root $o$ of the tree. Consider the
following process which generates a sequence
$F_\r(n)$ of subsets of vertices of the tree $T$. $F_{\r}(n)$ is constructed by a
rotor-router process consisting
of $n$ rotor-router walks starting at the root $o$, such that each vertex of $F_{\r}(n)$ contains
exactly one particle.  In the first step put a particle at the root $o$ and set $F_{\r}(1) = \{o\}$.
Inductively given $F_{\r}(n)$ and the rotor configuration that was created by the previous step, we
construct the next set $F_{\r}(n+1)$ using the following rotor-router procedure. Perform rotor-router
walk with a particle starting at the root $o$, until one of the following stopping conditions
occurs:
\begin{enumerate}[(a)]
\item\label{frontier_a} The particle reaches $s$. Then set $F_{\r}(n+1) = F_{\r}(n)$.
\item\label{frontier_b} The particle reaches a vertex $x$, which has never been visited before. Then set \\ $F_{\r}(n+1) = F_{\r}(n) \cup \{x\}$.
\item\label{frontier_c} The particle reaches an element $y\in F_{\r}(n)$. We delete $y$ from $F_{\r}(n)$, i.e.,
set $F'(n) = F_{\r}(n)\setminus\{y\}$. Now there are two particles at $y$, both of which are
restarted until stopping condition \eqref{frontier_a}, \eqref{frontier_b} or \eqref{frontier_c} for
the set $F'(n)$ applies to them. Note that since we are on a tree at least one particle will stop at a child of $y$ after one step, due
to halting condition \eqref{frontier_b}.
\end{enumerate}

We will call the set $F_{\r}(n)$ the \emph{frontier} of $n$ particles.
Basic properties of this process can be found in \cite{huss_sava_errata_dircovers}. Note that the frontier process $F_{\r}(n)$ depends on the underlying tree $T$.
Following \cite{huss_sava_errata_dircovers}, let us introduce 
\begin{equation}\label{eq:max_height}
M(n) = \max_{\r\in\mathcal{R}(T)} \max_{x\in F_\r(n)}|x|, 
\end{equation}
where $|x|$ is the distance of $x$ to the root $o$ and $\mathcal{R}(T)$ is
the set of all rotor configurations on $T$.
$M(n)$ represents the maximal height of all possible frontiers of $n$ particles. 
In order to get an upper bound for $M(n)$,
we will use the \textit{anchored expansion constant}.

Let $G$ be an infinite graph with vertex set $V=V(G)$ and edge set $E=E(G)$. For $A\subset V(G)$ we denote by $|A|$ the cardinality of $A$ and by $\partial A$
the \textit{vertex boundary} of $A$, that is, the set of vertices in $V(G)\setminus A$ that have one neighbor in $A$.
We say that the set $A$ is connected if the induced subgraph on $A$ is connected. Fix the root $o\in V(G)$.
The anchored expansion constant of $G$ is
\begin{equation}\label{eq:anch_exp}
 \iota^*_{E}(G)=\liminf_{n\to\infty}\bigg\{\frac{|\partial A|}{|A|}:o\in A\subset V(G),\ A
 \text { is connected}, \ n\leq |A|<\infty\bigg\}.
\end{equation}

For the remainder of this section, we assume that the tree $T$ has positive anchored expansion constant $\iota^{*}_{E}(T)>0$.

The next result is a generalization of \cite[Lemma 1.5]{huss_sava_errata_dircovers}.
%Its proof is slightly different and  uses the anchored expansion constant introduced above.

\begin{lemma}\label{lem:mn_bound}
Assume $\iota^{*}_{E}(T)>0$. Then there exists $c=c(T) < 1$ such that $M(n) < c n$, for $n$ large enough.
\end{lemma}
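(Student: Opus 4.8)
The plan is to bound the height $h:=M(n)$ of the largest possible frontier by a particle-counting argument that controls how many of the injected particles are forced to remain in the tree rather than escape through the sink, and then to convert positive anchored expansion into a strictly super-linear lower bound for this number in terms of $h$.

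First I would dispose of the trivial case: if $M(n)$ stays bounded along a subsequence the statement is immediate, so assume a configuration $\r$ and a frontier vertex $x\in F_{\r}(n)$ with $|x|=h\to\infty$. Let $A=A_n$ be the subtree spanned by $o$ and all vertices visited while building $F_{\r}(n)$; it is connected, contains $o$, and has height $h$, so $|A|\geq h+1$. Anchored expansion then gives, for $n$ (hence $h$) large, $|\partial A|\geq \iota^{*}_{E}(T)\,|A|\geq \iota^{*}_{E}(T)\,h$. The content of this bound is that the geodesic from $o$ to $x$ cannot be ``bare'': a positive density of the vertices of $A$ must carry children branching off the geodesic, and — crucially — by the rotor mechanism the frontier cannot pass such a vertex without the rotor having cycled past, hence visited, some of these side-children.

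Next I would set up the particle accounting. Writing $u(v)$ for the number of times a particle is routed out of $v$ and using rotor balance (the numbers of exits from $v$ to its neighbours differ pairwise by at most $1$), the injected particles split into those eventually absorbed at the sink $s$ and those left on the frontier, giving the identity
\[
n \;=\; \#\{\text{particles absorbed at } s\} \;+\; |F_{\r}(n)|.
\]
Along the geodesic $o=v_0,\dots,v_h=x$ the net downward flow across the edge $(v_j,v_{j+1})$ equals the number of frontier vertices in the subtree below $v_{j+1}$, hence is $\geq 1$; telescoping these relations together with rotor balance forces the downward flow out of the root to be at least $h-O(1)$, and then rotor balance at the root makes a comparable number of particles leak into $s$. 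By itself this already yields the soft bound $M(n)\leq n+O(1)$, i.e.\ the assertion with $c=1$.

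The crux, and the step I expect to be the main obstacle, is upgrading $c=1$ to some $c<1$. Here I would use anchored expansion to show that the rotor mechanism is forced to divert a definite proportion of the mass off the geodesic and that a fixed fraction of the diverted mass is \emph{retained} on the frontier rather than returning to $s$. Concretely, the density-$\iota^{*}_{E}(T)$ set of branch vertices each feed a visited side-subtree, and a rotor-flow estimate on these side subtrees should show that a constant fraction of the particles entering them stay, giving $|F_{\r}(n)|\geq \delta\,h$ for some $\delta=\delta(\iota^{*}_{E}(T))>0$. Feeding this into the identity together with the leakage bound yields $n\geq(1+\delta)h-O(1)$, hence $M(n)=h\leq \tfrac{n}{1+\delta}+O(1)<cn$ for large $n$, with $c=\tfrac{1}{1+\delta}<1$. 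The delicate points are precisely this quantitative ``retention'' estimate — since a priori a single deep thin tendril has a tiny frontier, and one must rule this out using the ambient expansion — together with the bookkeeping needed to ensure that the $O(1)$ rotor-imbalance errors accumulated over the $h$ levels do not erode the linear gain. The structural facts about the frontier construction itself I would import from \cite{huss_sava_errata_dircovers}.
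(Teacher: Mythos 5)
Your plan correctly identifies that positive anchored expansion must force branching along the deep geodesic and that this branching has to cost extra particles, but there is a genuine gap at exactly the point you flag as the crux: the ``retention'' estimate $\#F_{\r}(n)\geq\delta h$ is never proved, only described as something a rotor-flow estimate ``should show''. Nothing in the rotor mechanism by itself guarantees that a constant fraction of the particles entering a visited side-subtree stays there rather than being routed back up and eventually into $s$; the statement you need is essentially Corollary \ref{cor:down_sink}, which the paper derives later from Theorem 1 of \cite{holroyd_propp} together with Lemma \ref{lem:k_bound} \emph{and with Lemma \ref{lem:mn_bound} itself as an input} --- so as stated your route is circular. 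Your ``soft bound'' step is also shaky: the net flow across the edge $(v_0,v_1)$ equals the number of frontier particles sitting in the subtree below $v_1$, which need not be of order $h$ (on a bare path it is $1$), so the telescoping you describe does not force the root to leak $h-O(1)$ particles into $s$; you are conflating net flow across an edge with the number of downward traversals of it.

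The paper's argument is more local and avoids both issues. Take a deepest frontier vertex $x$ with $|x|=M$, the geodesic $o=x_0,\dots,x_M=x$, and let $n_i$ be the particle count at the time $x_i$ enters the frontier. The key observation is a rotor-increment count at a single vertex: for $x_{i+2}$ to join the frontier the rotor at $x_i$ must point at $x_{i+1}$ a second time, i.e.\ complete a full cycle through all $\d_{x_i}+1$ neighbours, and while not all children of $x_i$ are in the frontier each passing particle increments that rotor only a bounded number of times (it stops at a fresh child or is sent back up); hence $n_{i+2}-n_i\geq \d_{x_i}+1$. Anchored expansion is then applied to the \emph{path itself}, not to the whole visited set: $|\partial\tilde p|\geq\kappa|\tilde p|$ and $|\partial\tilde p|+|\tilde p|=1+\sum_i\d_{x_i}$ give $\sum_{i=0}^{M-2}(\d_{x_i}+1)\geq(M-1)(\kappa+2)-1$, while the telescoping sum satisfies $\sum_{i=0}^{M-2}(n_{i+2}-n_i)=n_M+n_{M-1}-n_0-n_1<2n$. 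Comparing the two yields $M\leq\frac{2}{\kappa+2}\,n+O(1)$ with $c=\frac{2}{\kappa+2}<1$, and no retention estimate is needed. To salvage your approach you would have to prove the retention lemma independently of Corollary \ref{cor:down_sink}; the counting above is the standard way around that obstacle.
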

\begin{proof}
Consider the frontier process $F_{\r}(n)$ on $T$.
Let $x$ be an element of $F_{\r}(n)$ with maximal distance $M = |x|$ to the root $o$. Denote by
$p = (o = x_0 , x_1, \ldots, x_M = x)$ the shortest path between $o$ and $x$. Since $F_{\r}(1) = \{o\}$
and by the iterative construction of $F_{\r}(n)$, there exist $1 = n_0 < n_1 < \cdots < n_M = n$, such that
$x_i \in F_{\r}(n_i)$ for all $i\in 0,\ldots,M$. 

We want to find a lower bound for $n_{i+2} - n_i$, that is, for the number of steps needed to replace
$x_i$ by $x_{i+2}$ in the frontier. At time $n_i$, the vertex $x_i$ is added to the frontier.
The next time after $n_i$ that a particle visits $x_i$ halting condition \eqref{frontier_c} occurs,
thus the rotor at $x_i$ is incremented two times. As long as not all children of $x_i$
are part of the frontier, every particle can visit $x_i$ at most once, since it either stops immediately
at a child of $x_i$ on stopping condition \eqref{frontier_b} or is returned to the ancestor of $x_i$. This means that
at subsequent visits the rotor at $x_i$ is incremented exactly once. In order for $x_{i+2}$ to be added
to the frontier, the rotor at $x_i$ has to point at direction $x_{i+1}$ twice. Thus replacing $x_i$ with
$x_{i+2}$ in the frontier, needs at least $\d_{x_i}+1$ particles which visit $x_i$.
Hence, $n_{i+2} - n_i \geq 1+\d_{x_i}$, for $i=0,\ldots,M-2$.
Denote by $\tilde{p} = (x_0 , x_1, \ldots, x_{M-3}, x_{M-2})$.
By assumption, we have $\iota^*_E(T) > 0$. Thus, for $M$ big
enough there exists a constant $\kappa > 0$, such that
$|\partial \tilde{p}| \geq \kappa |\tilde{p}|$.
Since $\tilde{p}$ is a path of a tree we get
\begin{equation*}
|\partial\tilde{p}| + |\tilde{p}| = 1 + \sum_{i=0}^{M-2} \d_{x_i}.
\end{equation*}
We have therefore
\begin{equation}
\label{eq:n_i_lower}
\begin{aligned}
\sum_{i=0}^{M-2} n_{i+2} - n_i &\geq \sum_{i=0}^{M-2} \big(1+\d_{x_i}\big) =
(M-1) + |\tilde{p}| + |\partial{\tilde{p}}| - 1 \\
&\geq (M-1)(\kappa + 2) - 1.
\end{aligned}
\end{equation}
On the other hand
\begin{equation}
\label{eq:n_i_upper}
\sum_{i=0}^{M-2} n_{i+2} - n_i = n_M + n_{M-1} - n_1 - n_0 < 2 n.
\end{equation}
Combining \eqref{eq:n_i_lower} and \eqref{eq:n_i_upper} gives
$M \leq \frac{2}{\kappa+2} n + \frac{1}{\kappa+2} + 1$, which proves the claim.
\end{proof}
We aim now at getting a lower bound for the size of $F_{\r}(n)$. Following \cite{huss_sava_errata_dircovers}, we first estimate the number of particles stopped at $s$ during the
formation of the frontier $F_{\r}(n)$. This is accomplished using Theorem 1 from \cite{holroyd_propp}. For a tree $T$, a rotor configuration $\r$, and $n\geq 1$ define
\begin{equation}\label{eq:fill_holes}
 \ell(n)=\{x\in T: |x|=M(n) \text{ and the path from $o$ to $x$ contains no vertex of } F_{\r}(n)\},
\end{equation}
with $M(n)$ defined as in \eqref{eq:max_height}.
By construction, $T\setminus F_{\r}(n)$ does not need to have a finite component.
In order to rectify this, we add additional vertices $\ell(n)$ introduced in \eqref{eq:fill_holes} on the maximal level $M(n)$.
All these additional vertices were not touched by a rotor particle during the formation of $F_{\r}(n)$. Let now 
\begin{equation}\label{eq:frontier_sink}
S=F_{\r}(n)\cup \ell(n)
\end{equation}
be the sink determined by the frontier process $F_{\r}(n)$. 
Then the component of $T\setminus S$ containing the root is finite. We denote this component by $T^S$, and we refer to $T^S$ as the truncation of $T$
at the sink $S$.

Let now $(X_t)$ be the simple random walk on $T$.
Furthermore, let $\tau_s=\inf\{t\geq 0: X_t= s\}$ and $\tau_S=\inf\{t\geq 0: X_t\in S\}$ be the first hitting time of $s$ and $S$ respectively.
Consider now the hitting probability
\begin{equation}\label{eq:hit_pb}
 h(x)=h_s^{S}(x)=\P_x[\tau_s<\tau_{S}],\quad x\in T^S,
\end{equation}
that is, the probability to hit $s$ before $S$, when the random walk starts in $x$. 
We have $h(s)=1$ and $h(x)=0$, for all $x\in S$. For $x\in T\setminus T^S$, we set $h(x)=0$.  

Start now $n$ rotor particles at the root $o$, and stop them when they either reach $s$ or $S$. By the Abelian property of rotor-router walks and by the construction of the frontier process $F_{\r}(n)$ we will have exactly one rotor particle at each vertex of $F_{\r}(n)$, no particles at $\ell(n)$, and the rest of the particles
are at $s$. In order to estimate the proportion of rotor particles stopped at $s$ we use Theorem 1 from \cite{holroyd_propp}, which we state here adapted to our case.
\begin{theorem}[Theorem 1, \cite{holroyd_propp}]
Consider the sink $S$ as above, and let $(X_t)$ be the simple random walk on $T$.
Let $E$ be the edge set of $T$ and suppose that the quantity 
\begin{equation}\label{eq:const_k}
K=1+\sum_{(x,y)\in E}|h(x)-h(y)|
\end{equation}
is finite. If we start $n$ rotor particles at the root $o$,
then
\begin{equation}\label{eq:proportion_estimate}
 \Big|h(o)-\frac{n_{s}}{n}\Big|\leq \frac{K}{n},
\end{equation}
where $n_{s}$ represents the number of particles stopped at $s$.
\end{theorem}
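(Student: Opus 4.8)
The plan is to play the \emph{mean-value property} of the harmonic function $h$ against the defining feature of the rotor mechanism, namely that the rotor at a vertex splits the outgoing particles as evenly as possible among its neighbours. Concretely, for a vertex $x\in T^S$ let $u(x)$ be the total number of particles ever routed out of $x$ while the $n$ walks are performed (the \emph{odometer}), and for an ordered pair of neighbours let $f(x,y)$ be the number of particles sent from $x$ to $y$, so that $u(x)=\sum_{y\sim x}f(x,y)$. Since particles are absorbed at $s$ and at $S$, no particle is ever emitted from there, i.e.\ $f(x,\cdot)\equiv 0$ for $x\in S\cup\{s\}$. As the rotor at an interior vertex $x$ runs cyclically through its $\d_x+1$ neighbours, each neighbour is served either $\lfloor u(x)/(\d_x+1)\rfloor$ or $\lceil u(x)/(\d_x+1)\rceil$ times; writing $\epsilon(x,y)=f(x,y)-u(x)/(\d_x+1)$ this is the key rotor estimate
\[
  |\epsilon(x,y)|<1,\qquad \sum_{y\sim x}\epsilon(x,y)=0 .
\]

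Next I would telescope $h$ along the trajectories. Every particle starts at $o$ and ends at an absorbing vertex, and for a single particle the total increment $h(\text{end})-h(o)$ equals the sum of $h(y)-h(x)$ over its steps $x\to y$. Summing over the $n$ particles and counting each directed step with its multiplicity $f(x,y)$, while using $h(s)=1$ and $h\equiv 0$ on $S$ on the left-hand side, gives
\[
  n_s-n\,h(o)=\sum_{(x,y)} f(x,y)\big(h(y)-h(x)\big).
\]
Now I would group the right-hand side by the emitting vertex $x$; only interior $x$ contribute, and writing $f(x,y)=\tfrac{u(x)}{\d_x+1}+\epsilon(x,y)$ the bulk term $\tfrac{u(x)}{\d_x+1}\sum_{y\sim x}(h(y)-h(x))$ vanishes by the mean-value identity $\tfrac{1}{\d_x+1}\sum_{y\sim x}h(y)=h(x)$, valid since $h$ is harmonic in the interior of $T^S$. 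This leaves only the discrepancies
\[
  n_s-n\,h(o)=\sum_{x\text{ interior}}\ \sum_{y\sim x}\epsilon(x,y)\big(h(y)-h(x)\big).
\]

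Finally I would estimate each term using $|\epsilon(x,y)|<1$, so that
\[
  \big|n_s-n\,h(o)\big|\le \sum_{x\text{ interior}}\sum_{y\sim x}|h(y)-h(x)|\le \sum_{(x,y)\in E}|h(x)-h(y)|<K,
\]
and divide by $n$ to obtain \eqref{eq:proportion_estimate}. The main obstacle is precisely this last bookkeeping step: one must check that the rotor discrepancy is controlled \emph{edge by edge} rather than merely on average, and account for the effect of the initial and final rotor positions at each vertex, which is where the additive constant (the $+1$ in the definition of $K$) comes from. The remaining ingredients—harmonicity of $h$ in the interior of $T^S$ and the finiteness of $K$—are exactly what the hypotheses supply, so no further analytic input is needed.
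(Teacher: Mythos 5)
The paper does not actually prove this statement: it is imported verbatim (adapted to the tree setting) from Holroyd and Propp, so there is no internal proof to compare yours against. What you have written is, in substance, the standard proof of that cited theorem: decompose the particle flow into the odometer $u(x)$ and the directed edge counts $f(x,y)$, telescope the harmonic function $h$ along the $n$ trajectories to get $n_s-nh(o)=\sum f(x,y)\big(h(y)-h(x)\big)$, cancel the bulk term using harmonicity of $h$ at the non-absorbing vertices of $T^S$, and bound the remaining rotor discrepancies via $|\epsilon(x,y)|<1$. All of these steps are sound here: every particle terminates because $T^S$ is finite with nonempty absorbing boundary, absorbed vertices emit nothing, and $h\equiv 0$ off $T^S$ makes the edge sum effectively finite. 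Two bookkeeping remarks. First, your last inequality replaces $\sum_{x\text{ interior}}\sum_{y\sim x}|h(y)-h(x)|$, which is a sum over \emph{directed} edges with non-absorbing tail, by $\sum_{(x,y)\in E}|h(x)-h(y)|$; this is exactly the Holroyd--Propp constant if $E$ is read as the set of ordered pairs, but if $E$ in \eqref{eq:const_k} is meant as the set of undirected edges (which is what the computation in Lemma~\ref{lem:k_bound} suggests), then an edge with both endpoints non-absorbing is counted twice on the left and once on the right, and you would need a factor $2$ there. Second, no extra accounting for ``initial and final rotor positions'' is actually required: the strict bound $|\epsilon(x,y)|<1$ already yields $|n_s-nh(o)|\le K-1<K$, so the additive $+1$ in \eqref{eq:const_k} is pure slack. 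Neither point affects the validity of \eqref{eq:proportion_estimate}; your argument is a correct reconstruction of the external proof the paper relies on.
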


We make use of the following result, whose proof can be found in \cite[Lemma 1.7]{huss_sava_errata_dircovers}.
\begin{lemma}\label{lem:k_bound}
Let $K$ be the constant defined in \eqref{eq:const_k}. Then $ K= 1+\big(M(n)+1\big) \big(1-h(o)\big)$.
\end{lemma}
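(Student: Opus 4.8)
The plan is to read $h$ as the harmonic function on the finite truncation $T^S$ with boundary data $h(s)=1$ and $h\equiv 0$ on $S$, and to evaluate the total variation $\sum_{(x,y)\in E}|h(x)-h(y)|$ by telescoping it level by level. First I would prove that $h$ is monotone along the tree: for any $x\neq s$ with parent $p$, the parent lies on the unique path from $x$ to $s$, so $\{\tau_s<\tau_S\}\subseteq\{\tau_p<\tau_S\}$, and the strong Markov property at the first visit to $p$ gives $h(x)=\P_x[\tau_p<\tau_S]\,h(p)\le h(p)$. Hence on every edge $|h(x)-h(y)|=h(p)-h(c)\ge0$, with $c$ the child and $p$ the parent. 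Since $h$ vanishes on $S$ and on all of $T\setminus T^S$, only the edges inside $T^S$ and the edges joining $T^S$ to $S$ survive, and these are finite in number.

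Next I would organise the surviving edges by their distance to the root. Let $\Phi_k$ denote the total current $\sum(h(p)-h(c))$ across the cut that separates level $k-1$ from level $k$, the edge $(s,o)$ being the topmost cut. By monotonicity each $\Phi_k\ge0$ and the variation equals $\sum_k\Phi_k$. Harmonicity of $h$ at an interior vertex is exactly the statement that the current entering from the parent equals the current leaving to the children, so current is conserved whenever it passes through a vertex of $T^S$ and can only leave the network at a vertex of $S$, where $h=0$. The top cut carries $\Phi_0=h(s)-h(o)=1-h(o)$, the amount injected at the sink $s$.

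Finally I would count the cuts: between $s$ and the deepest occupied level $M(n)$ there are exactly $M(n)+1$ of them, so if each carried the full injected current we would obtain $\sum_{(x,y)\in E}|h(x)-h(y)|=(M(n)+1)(1-h(o))$, which is the claimed value of $K$. Establishing that every cut does carry the full current, i.e.\ $\Phi_k=1-h(o)$ for all $k$, is the main obstacle: a vertex of $S$ lying at some level $j<M(n)$ whose parent $p$ still satisfies $h(p)>0$ drains an amount $h(p)$ and lowers every subsequent $\Phi_k$. Controlling this is exactly where the construction of the sink must be used — one has to invoke the filling set $\ell(n)$ of \eqref{eq:fill_holes}, which caps the open branches of $T\setminus F_{\r}(n)$ at the common level $M(n)$, together with the position of the frontier $F_{\r}(n)$ inside $T$, in order to argue that the injected current reaches level $M(n)$ undiminished. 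This level-by-level conservation is the heart of the matter; once it is secured, the monotonicity and telescoping of the first two steps close the argument at once.
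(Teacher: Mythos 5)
The paper does not actually prove this lemma --- it is imported verbatim from \cite[Lemma 1.7]{huss_sava_errata_dircovers} --- so there is no internal proof to compare against; I judge your proposal on its own. Your framework is the right one: the monotonicity $h(c)\le h(p)$ along every edge via the strong Markov property, the decomposition of $\sum_{(x,y)\in E}|h(x)-h(y)|$ into level cuts $\Phi_k$, and the reading of harmonicity at interior vertices of $T^S$ as conservation of current are all correct, and $\Phi_0=1-h(o)$ is right. But you stop precisely where a proof is needed: you state that one must establish $\Phi_k=1-h(o)$ for all $k=0,\ldots,M(n)$ and defer this to ``the construction of the sink''. As written, the proposal is not a proof.

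More seriously, the step you defer is false in general, so it cannot be filled in. A vertex $y\in F_{\r}(n)$ at level $j<M(n)$ whose parent $p$ lies in $T^S$ has $h(p)>0$ (every vertex of the connected set $T^S$ reaches $s$ with positive probability) while $h(y)=0$, so the current $h(p)$ is absorbed at level $j$ and every subsequent cut is strictly smaller; the filling set $\ell(n)$ of \eqref{eq:fill_holes} only adds absorbing vertices at level $M(n)$ on branches never touched by the frontier and does nothing to prevent this loss, and frontiers with vertices on several levels do occur. For instance, on the binary tree with $S=\{a,b_1,b_2\}$, where $a,b$ are the children of $o$ and $b_1,b_2$ the children of $b$, one finds $h(o)=3/8$, $h(b)=1/8$, total variation $3/2$, whereas $(M+1)(1-h(o))=3\cdot\tfrac58=15/8$. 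What your argument does deliver, in one line, is the inequality
\begin{equation*}
\sum_{(x,y)\in E}|h(x)-h(y)|=\sum_{k=0}^{M(n)}\Phi_k\le\big(M(n)+1\big)\,\Phi_0=\big(M(n)+1\big)\big(1-h(o)\big),
\end{equation*}
since each $\Phi_k$ is obtained from $\Phi_{k-1}$ by subtracting the nonnegative current absorbed at level $k-1$. The resulting bound $K\le 1+(M(n)+1)(1-h(o))$ is all that Corollary \ref{cor:down_sink} uses, so the correct course is to prove (and to state) the inequality rather than to chase the equality.
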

%\begin{proof}
%The function $h$ is harmonic away from the sink:
%$ h(x)=\frac{1}{\deg(x)}\sum _{y\sim x}h(y)$, if $x \notin s\cup S$, and $h(x)=0$ for $x\in(T\setminus T^S)\cup S$.  For a $x\in T^{S}\setminus S$ and its ancestor $x^{(0)}$, we have $h\big(x^{(0)}\big)\geq h(x)$, and
%\begin{equation*}
%h\big(x^{(0)}\big)-h(x)=\sum_{i=1}^{\d_x}\Big(h(x)-h\big(x^{(i)}\big)\Big).
%\end{equation*}
%Then 
%\begin{equation*}
%K=1+\Big(h\big(o^{(0)}\big)-h(o)\Big)+\sum_{k=0}^{M(n)-1}\sum_{x\in S^k}\sum_{i=1}^{\d_x}\Big(h(x)-h\big(x^{(i)}\big)\Big),
%\end{equation*}
%where $S^k=\{y\in T:|y|=k\}$ represents the $k$-th level of the tree $T$.
%For a fixed $k$
%\begin{align*}
%\sum_{x\in S^k}\sum_{i=1}^{\d_x}\Big(h(x)-h\big(x^{(i)}\big)\Big) & 
%= \sum_{x\in S^k}h\big(x^{(0)}\big)-h(x)=\sum_{x\in S^{k-1}}\sum_{i=1}^{\d_x}\Big(h(x)-h\big(x^{(i)}\big)\Big)\\
%& = \sum_{x\in S^{k-j}}\sum_{i=1}^{\d_x}\Big(h(x)-h\big(x^{(i)}\big)\Big), \text{ for } j=2,\ldots ,k-1\\
%& =h(s)-h(o)= 1 - h(o).
%\end{align*}
%Summing up over all levels the claim follows.
%\end{proof}

\begin{corollary}\label{cor:down_sink}
Assume $\iota^{*}_{E}(T)>0$.
There exists $\kappa=\kappa(T)\in (0,1)$, such that $\#F_{\r}(n)>\kappa n$, for $n$ large enough.
\end{corollary}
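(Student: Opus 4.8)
The plan is to combine the three preceding results — the height bound of Lemma \ref{lem:mn_bound}, the Holroyd–Propp estimate \eqref{eq:proportion_estimate}, and the identity for $K$ in Lemma \ref{lem:k_bound} — with a cutset argument showing that $1-h(o)$ stays bounded away from $0$ uniformly in $n$. First I would run $n$ particles from $o$, stopping each at $s$ or at the sink $S=F_{\r}(n)\cup\ell(n)$. By the Abelian property together with the construction of the frontier, the terminal configuration places exactly one particle on each vertex of $F_{\r}(n)$, none on $\ell(n)$, and the remaining $n_s$ particles at $s$. This yields the exact identity
\begin{equation*}
\#F_{\r}(n)=n-n_s .
\end{equation*}
Feeding the Holroyd–Propp bound \eqref{eq:proportion_estimate}, in the form $n_s\leq n\,h(o)+K$, into this identity gives $\#F_{\r}(n)\geq n\bigl(1-h(o)\bigr)-K$.

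Next I would eliminate $K$ and $M(n)$. Substituting $K=1+(M(n)+1)(1-h(o))$ from Lemma \ref{lem:k_bound} and rearranging produces
\begin{equation*}
\#F_{\r}(n)\geq \bigl(1-h(o)\bigr)\bigl(n-M(n)-1\bigr)-1 .
\end{equation*}
By Lemma \ref{lem:mn_bound} there is $c=c(T)<1$ with $M(n)<cn$ for $n$ large, so $n-M(n)-1>(1-c)n-1$, and the right-hand side is at least $\bigl(1-h(o)\bigr)\bigl((1-c)n-1\bigr)-1$.

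The main obstacle, and the final step, is to bound $1-h(o)$ below by a positive constant independent of $n$. Here I would use that $S$ is a cutset separating $o$ from infinity, so that $T^S$ is finite: any trajectory with $\tau_s=\infty$ must be transient — the tree has no leaves and positive anchored expansion, hence the walk is transient — and therefore crosses $S$, giving $\{\tau_s=\infty\}\subseteq\{\tau_S<\tau_s\}$. Consequently
\begin{equation*}
1-h(o)=\P_o[\tau_S<\tau_s]\geq \P_o[\tau_s=\infty]=\gamma(T)>0,
\end{equation*}
where positivity of $\gamma(T)$ follows from transience under $\iota^{*}_{E}(T)>0$. Combining the displays gives $\#F_{\r}(n)\geq \gamma(T)\bigl((1-c)n-1\bigr)-1$, so choosing any $\kappa<\gamma(T)(1-c)$ in $(0,1)$ yields $\#F_{\r}(n)>\kappa n$ for $n$ large enough. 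Note that the cutset lower bound on $1-h(o)$ is insensitive to the particular $S$ produced by $\r$, so the hardest part is really the qualitative input $\gamma(T)>0$; the remaining steps are bookkeeping with the already-proven lemmas.
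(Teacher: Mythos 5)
Your proposal is correct and follows essentially the same route as the paper: the identity $\#F_{\r}(n)=n-n_s$, the Holroyd--Propp bound \eqref{eq:proportion_estimate}, Lemma \ref{lem:k_bound} for $K$, and Lemma \ref{lem:mn_bound} for $M(n)$, combined in the same way. The only difference is that you make explicit the uniform-in-$n$ lower bound $1-h(o)\geq\gamma(T)>0$ (needed because $h(o)$ depends on $n$ through $S$), a point the paper's proof leaves implicit when it asserts $h(o)(1-c)+c<1$ suffices to pick a single $\kappa'<1$.
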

\begin{proof}
 From \eqref{eq:proportion_estimate}, we have $\frac{n_{s}}{n}\leq \frac{K}{n} + h(o)$.
 Putting together Lemma \ref{lem:mn_bound} and \ref{lem:k_bound}, we obtain $K< 1+(cn+1) (1-h(o))$. Then we have
 \begin{equation*}
 n_s\leq n\left(h(o)(1-c)+c+\frac{2}{n}\right).
 \end{equation*}
 Since $h(o)(1-c)+c<1$ we can choose a $\kappa' < 1$ such that $n_s < \kappa' n$,
 for $n$ large enough. Since $\#F_{\r}(n)=n-n_s$, the claim follows by putting $\kappa = 1-\kappa'$.
\end{proof}

We shall use the following result of Chen and Peres \cite{chen_peres}.
\begin{theorem}[Theorem 6.36, \cite{LP:book}]
\label{gw_anchored_expansion}
For a supercritical Galton-Watson tree $\T$, given non-extinction we have $\iota^{*}_{E}(\T)>0$, $\gw$-almost surely. 
\end{theorem}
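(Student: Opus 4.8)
The statement is the one external ingredient (due to Chen and Peres) that the transience argument borrows, so here is how I would establish it from scratch. The plan is to work first in the case $p_0=0$ treated throughout the paper, where $\T$ is infinite and leafless, and to reduce the general supercritical case conditioned on non-extinction to this one by stripping the (finite) bushes hanging off the infinite skeleton, which is itself a Galton--Watson tree with no leaves; since each bush is finite it perturbs numerator and denominator in \eqref{eq:anch_exp} only by a locally bounded amount and cannot push the $\liminf$ down to $0$. So assume $p_0=0$. The first reduction is to observe that for a connected set $A\ni o$ in a tree the vertex boundary and edge boundary coincide (each outside neighbour is attached by a unique edge), and that, $A$ being a subtree with $|A|-1$ internal edges,
\begin{equation*}
|\partial A| \;=\; \sum_{v\in A}\d_v \;-\; |A| + 1 .
\end{equation*}
Thus $|\partial A|\le \epsilon|A|$ forces the total offspring $\sum_{v\in A}\d_v$ to be within $\epsilon|A|$ of $|A|$, i.e.\ $A$ must be an atypically ``thin'' subtree in which almost every vertex has essentially one child inside $A$. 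It therefore suffices to prove that, for some $\epsilon>0$, $\gw$-a.s.\ only finitely many connected $A\ni o$ satisfy $|\partial A|\le\epsilon|A|$; such finitely many sets have bounded size, so the infimum in \eqref{eq:anch_exp} over $|A|\ge n$ exceeds $\epsilon$ once $n$ is large, giving $\iota^*_E(\T)\ge\epsilon>0$.

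Second, I would bound the expected number of such sets by a first-moment/Chernoff computation and conclude by Borel--Cantelli. Let $N_k$ be the number of connected $A\ni o$ with $|A|=k$ and $|\partial A|\le\epsilon k$, and encode subtrees through the generating function $F(x,y)=\E\big[\sum_{A\ni o} x^{|A|}y^{|\partial A|}\big]$, the sum running over finite subtrees of $\T$ rooted at $o$. Decomposing at the root and using the branching property gives the functional equation
\begin{equation*}
F(x,y) \;=\; x\,\varphi\big(y+F(x,y)\big), \qquad \varphi(s)=\E\!\left[s^{\xi}\right],
\end{equation*}
since each child of the root is either excluded (contributing a boundary edge, weight $y$) or carries an included subtree (weight $F$). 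For $0<y_0<1<x_0$ the elementary inequality $\mathbf{1}\{|\partial A|\le\epsilon k\}\le y_0^{|\partial A|-\epsilon k}$ together with positivity of the coefficients yields, by the Cauchy coefficient bound $[x^k]F(x,y_0)\le x_0^{-k}F(x_0,y_0)$, the estimate $\E[N_k]\le y_0^{-\epsilon k}\,[x^k]F(x,y_0)\le \big(x_0 y_0^{\epsilon}\big)^{-k}F(x_0,y_0)$.

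The heart of the matter --- and the step I expect to be the main obstacle --- is to exhibit $x_0>1$ and a small $y_0>0$ for which $F(x_0,y_0)<\infty$ and $x_0 y_0^{\epsilon}>1$; everything else is bookkeeping. Finiteness of $F(x_0,y_0)$ is exactly the existence of a finite fixed point of the increasing convex map $G(F)=x_0\varphi(y_0+F)$, and near $F=0$ one has $G(0)=x_0\varphi(y_0)>0$ small and $G'(0)=x_0\varphi'(y_0)$. Because $\T$ is supercritical we have $p_1<1$, hence $\varphi'(0)=p_1<1$, and one may choose $x_0\in(1,1/p_1)$ and then $y_0>0$ small enough that $x_0\varphi'(y_0)<1$; then $G(F)-F$ is positive at $0$ with negative derivative there, so it vanishes at a finite $F^*>0$, which is the minimal fixed point and equals $F(x_0,y_0)<\infty$. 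Working at small $y_0$ keeps the argument of $\varphi$ near $0$, so that $\varphi'(y_0)<\infty$ and no moment assumption on $\xi$ is needed. Finally, fixing such $x_0,y_0$ and choosing $\epsilon<\log x_0/\log(1/y_0)$ makes $x_0y_0^\epsilon>1$, whence $\sum_k\E[N_k]<\infty$; by Borel--Cantelli only finitely many thin subtrees occur $\gw$-a.s., and the reduction above gives $\iota^*_E(\T)>0$. The only remaining care is the $p_0>0$ reduction, which is routine.
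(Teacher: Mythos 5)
You should first be aware that the paper offers no proof of this statement: it is imported wholesale as Theorem 6.36 of \cite{LP:book} (due to Chen and Peres), so there is nothing internal to compare against. Your overall strategy is the standard route to the result and is sound in outline: reduce positivity of $\iota^*_E(\T)$ in \eqref{eq:anch_exp} to showing that almost surely only finitely many connected $A\ni o$ satisfy $|\partial A|\le\epsilon|A|$, use $|\partial A|=\sum_{v\in A}\d_v-|A|+1$ to interpret these as thin subtrees, and kill them by a first-moment bound extracted from the two-variable generating function $F(x,y)$ with its recursion $F=x\varphi(y+F)$, the coefficient bound $[x^k]F(x,y_0)\le x_0^{-k}F(x_0,y_0)$, and Borel--Cantelli. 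All of those steps are correct.

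The genuine gap sits exactly where you predicted it, and your argument there does not work as stated. From ``$G(F)-F$ is positive at $0$ with negative derivative there'' you cannot conclude that it vanishes: $G$ is convex, so $G(F)-F$ may attain a strictly positive minimum and never cross zero, and since $\varphi$ may have radius of convergence exactly $1$ the map $G$ is only defined for $y_0+F\le 1$, so there is no ``it must eventually cross'' escape either. Concretely, take $p_2=1$ (so $\varphi(s)=s^2$, $p_1=0$), $x_0=2$, $y_0=1/5$: your conditions $x_0\in(1,1/p_1)$ and $x_0\varphi'(y_0)=4/5<1$ both hold, yet $G(F)=2(F+1/5)^2$ has no fixed point (discriminant $1-4x_0y_0=-3/5<0$) and the iterates $F_{h+1}=G(F_h)$ diverge, i.e.\ $F(2,1/5)=\infty$. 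The repair is to use the hypotheses you left on the table at this point: $p_0=0$ and $p_1<1$ give $\varphi(a)<a$ for every $a\in(0,1)$. Fix such an $a$, choose $x_0\in\bigl(1,a/\varphi(a)\bigr)$, then $y_0>0$ small enough (by continuity of $\varphi$ on $[0,1]$) that $x_0\varphi(y_0+a)\le a$; then $G(a)\le a$, the monotone iterates starting from $F_1=G(0)$ stay below $a$, and the minimal fixed point, which is what the height-truncated sums increase to, satisfies $F(x_0,y_0)\le a<\infty$. With this replacement the proof is complete in the case $p_0=0$, which is all the paper uses. Finally, the reduction for $p_0>0$ is not ``routine'' as claimed: the bushes hanging off the skeleton are finite but not of locally bounded size, and a connected set may absorb many of them, inflating $|A|$ without touching $|\partial A|$; you need an additional a.s.\ bound (e.g.\ another exponential first-moment estimate, exploiting that the bushes are subcritical Galton--Watson trees whose offspring law, tilted by the extinction probability $q<1$, has exponential moments) showing that every large connected subset of the skeleton carries at most a constant multiple of its size in bush vertices.
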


Therefore,  if we perform the frontier process $F_{\rho}(n)$ on a Galton-Watson tree $\T$ with random rotor configuration $\rho$, we get by Corollary \ref{cor:down_sink}
that there exists a positive ($\rgw$-almost surely) random variable ${\kappa}$ such that for $n$ large enough $\#F_{\rho}(n)>{\kappa} n$, $\rgw$-almost surely.

\subsection{Transient part}
In this section we will prove the transient part of Theorem \ref{main_thm}.
Let $T$ be a tree with root $o$, and rotor configuration $\r$. Denote by
$T_1,\ldots,T_{\d_o}$ the principal branches of $T$, and by $\r_j$ the
restriction of $\r$ to $T_j$.
Write
\begin{equation*}
 l(T,\r)=\liminf_{n\to\infty}\dfrac{E_n(T,\r)}{n} \quad \text{ and }\quad 
 l_j(T,\r)=\liminf_{n\to\infty}\dfrac{E_n(T_j,\r_j)}{n}, \quad j=1,\ldots,\d_o.
\end{equation*}
From \cite[Lemma 25]{angel_holroyd_2011}, we have that for any tree $T$
and rotor configuration $\r$
\begin{equation}\label{eq:l_iteration2}
 l(T,\r)\geq 1-\dfrac{1}{1+\sum_{j=1}^{\d_o}l_j(T,\r)}.
\end{equation}

For a Galton-Watson tree $\T$, denote like above $\T_1,\ldots,\T_{\d_o}$ the $\d_o$ principal branches of $\T$,
where $\d_o$ represents the random degree of the root $o$ of $\T$, with distribution $\xi$ (the offspring distribution of the
Galton-Watson process). All $l_j(\T,\rho)$, $j=1,\ldots,\d_o$ are i.i.d.
under $\rgw$, with the same law as $l(\T,\rho)$, but $l(\T,\rho)$ is not independent of the $l_j(\T,\rho)$.
Even more, for $\rgw$-almost every tree $\T$ and configuration $\rho$, \eqref{eq:l_iteration2} holds.
We first show that under the conditions of Theorem \ref{main_thm}\eqref{main_thm_b} the random variable
$l(\T,\rho)$ is greater than zero with positive probability.

\begin{proposition}\label{cor:l_delta}
Let $\rho$ be a random $\mathcal{Q}$-distributed rotor configuration on a Galton-Watson tree $\T$ with
offspring distribution $\xi$, and let $\nu = \xi\cdot\mathcal{Q}$. Suppose $\E[\nu]>1$. Then $l(\T,\rho) > 0$, $\rgw$-almost surely.
\end{proposition}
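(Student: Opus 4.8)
The plan is to prove the equivalent statement that $q:=\rgw[l(\T,\rho)=0]$ equals $0$. Two preliminary observations set the stage. First, since the number of good children of a vertex never exceeds its total number of children, $\E[\nu]\leq\E[\xi]=m$, so the hypothesis $\E[\nu]>1$ forces $m>1$; as $p_0=0$, the tree $\T$ is then $\gw$-a.s.\ infinite and supercritical, and by Theorem~\ref{gw_anchored_expansion} we have $\iota^{*}_{E}(\T)>0$ for $\gw$-almost every $\T$. Second, I will exploit the self-similar structure recorded just before the statement: the branch quantities $l_1(\T,\rho),\dots,l_{\d_o}(\T,\rho)$ are i.i.d.\ copies of $l(\T,\rho)$, the root degree $\d_o$ is $\xi$-distributed and, given $\d_o$, independent of them, and the inequality \eqref{eq:l_iteration2} holds $\rgw$-almost surely.

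First I would establish a zero--one dichotomy for $q$. If $l(\T,\rho)=0$, then \eqref{eq:l_iteration2} gives $0\geq 1-\tfrac{1}{1+\sum_{j}l_j(\T,\rho)}$, which forces $\sum_{j=1}^{\d_o}l_j(\T,\rho)\leq 0$ and hence $l_j(\T,\rho)=0$ for every $j$. Thus $\{l(\T,\rho)=0\}\subseteq\bigcap_{j=1}^{\d_o}\{l_j(\T,\rho)=0\}$, and conditioning on $\d_o$ and using the i.i.d.\ property yields
\begin{equation*}
q\;\leq\;\E\big[q^{\d_o}\big]\;=\;g(q),\qquad g(z):=\E\big[z^{\xi}\big]=\sum_{k\geq 1}p_k z^k .
\end{equation*}
The generating function $g$ is convex on $[0,1]$ with $g(0)=p_0=0$ and $g(1)=1$, so its graph lies on or below the chord $y=z$; since $m=g'(1)>1$ the distribution $\xi$ is non-degenerate and $g$ is strictly convex, whence $g(z)<z$ for all $z\in(0,1)$. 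Combined with $q\leq g(q)$ this excludes $q\in(0,1)$, so $q\in\{0,1\}$.

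It then remains to rule out $q=1$, i.e.\ to produce a set of positive $\rgw$-probability on which $l(\T,\rho)>0$. This is where the frontier estimate enters. Because $\E[\nu]>1$, the subtree of good children is itself a supercritical Galton--Watson tree (offspring law $\nu$, as identified in the proof of part~\eqref{main_thm_a}) and survives with positive probability; meanwhile $\iota^{*}_{E}(\T)>0$ gives, through Corollary~\ref{cor:down_sink}, the linear lower bound $\#F_{\rho}(n)>\kappa n$ for $n$ large. On the survival event I would argue that the frontier advances to infinity along good-child rays and that a positive proportion of the $n$ launched particles are carried off to infinity rather than routed back to the sink $s$, so that $\liminf_n E_n(\T,\rho)/n>0$ there. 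This gives $q<1$, and together with the dichotomy $q\in\{0,1\}$ we conclude $q=0$, that is, $l(\T,\rho)>0$ almost surely.

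The step I expect to be the main obstacle is precisely this last one: converting the frontier bound into a genuine lower bound on escapes. Corollary~\ref{cor:down_sink} controls $\#F_{\rho}(n)$, the number of particles that have \emph{not} returned to $s$ in the truncated tree, which via the Abelian property equals the number of particles escaping to the receding sink $S=F_{\rho}(n)\cup\ell(n)$; but a particle counted there may still be turned back in the full tree $\T$, so some care is needed to transfer a linear count of ``escapes to $S$'' into a linear count of true escapes to infinity, and to do so on an event of positive probability. The supercriticality of the good-child subtree ($\E[\nu]>1$) together with the positivity of the anchored expansion constant are exactly the two ingredients that should make this transfer work.
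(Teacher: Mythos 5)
Your opening reduction is correct and is genuinely different from the paper's route: the zero--one dichotomy obtained from $\{l(\T,\rho)=0\}\subseteq\bigcap_{j}\{l_j(\T,\rho)=0\}$, the inequality $q\leq g(q)$ for the offspring generating function $g$, and strict convexity of $g$ (which holds because $\E[\nu]\leq m$ forces $m>1$, hence $p_k>0$ for some $k\geq 2$) is sound, and it would legitimately reduce the proposition to showing $l(\T,\rho)>0$ with merely \emph{positive} probability. The paper does not argue this way; it proves the almost sure statement directly by establishing $\P[\liminf_n E_n(\T,\rho)/n\geq\delta_\epsilon]\geq 1-\epsilon$ for every $\epsilon>0$.

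However, the step you flag as the main obstacle is precisely the step that carries the whole proof, and your proposal does not supply it; as written there is a genuine gap. The missing idea is the \emph{live path}: an infinite ray $(x_1,x_2,\ldots)$ in which each $x_{i+1}$ is a good child of $x_i$ has the property that a particle started at $x_1$ escapes to infinity without ever returning to $x_1$, since at every vertex the rotor serves all good children before the parent. By construction of the frontier, the subtrees hanging below the vertices of $F_{\rho}(n)$ are untouched by the first $n$ particles, so their rotors are still in the initial i.i.d.\ configuration; consequently the events $A_x=\{\text{the good-children tree rooted at }x\text{ is infinite}\}$, for $x\in F_{\rho}(n)$, are independent Bernoulli($\mathsf{p}$) with $\mathsf{p}>0$ because $\E[\nu]>1$. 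Restarting one particle at each frontier vertex and invoking the Abelian property (together with the truncation lemmas of \cite{holroyd_propp}) yields $E_n(\T,\rho)\geq\#X$ with $\#X=\sum_{x\in F_{\rho}(n)}\indicator{A_x}$, and combining $\#F_{\rho}(n)>\kappa n$ (Corollary \ref{cor:down_sink} with Theorem \ref{gw_anchored_expansion}) with a Chernoff bound and Borel--Cantelli gives the linear lower bound on escapes. Without this mechanism, your assertion that ``a positive proportion of the $n$ launched particles are carried off to infinity'' is unsupported: as you yourself observe, a particle stopped at the receding sink $S$ may still be routed back to $s$ in the full tree, and nothing in your sketch, including the anchored expansion bound alone, rules out that all of them eventually are.
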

\begin{proof}
The proof follows in essence the proof of  \cite[Theorem 6(i)]{angel_holroyd_2011} but some additional arguments are needed to treat Galton-Watson trees instead of homogeneous trees. For sake of completeness
we give the full argument here.

Recall the definition of the embedded Galton-Watson process of good children defined in the proof of Theorem \ref{main_thm}\eqref{main_thm_a}.  By assumption we have now that $\E[\nu]>1$, therefore the above mentioned branching process survives with some positive probability $\mathsf{p}$. A \textit{live path} is an infinite path, say $(x_1, x_2,\ldots)$, such that $x_{i+1}$ is a good child of $x_i$, for all $i\geq 1$. The important property of a live path is, that a particle starting in $x_{1}$ will escape to infinity without returning to $x_1$. 
The assumption $\E[\nu]>1$ implies that for all $x\in\T$ with probability $\mathsf{p}$ there exists a live path starting at $x$.

In the following a realization of $\rgw$ is constructed in the following way.  
We start with $n$ particles at the root and build the frontier process $F_{\rho}(n)$ according to the algorithm described in Section \ref{sec:frontier}. Now the first generation of the Galton-Watson tree is given by a realization of $\xi$ and a rotor configuration is attributed to the origin $o$
according to the distribution $\mathcal{Q}_{\d_o}$. Inductively, whenever a vertex 
sends out a particle for the first time, we construct randomly (and independently of everything before) the children and rotor configuration of this vertex. This is repeated until the frontier process $F_{\rho}(n)$ is constructed. Then,  each vertex that was not yet visited serves as the root of an independent copy of $\rgw$. 
 The law of the behavior of the particles is the same as under $\rgw$. 
Denote by $X$ the set of vertices in $F_{\rho}(n)$, for which there is a live path starting at $x$.
 We shall first prove that
\begin{equation}\label{eq:ldev}
E_n(\T,\rho)\geq \# X.
\end{equation}
From \cite[Lemmas 18,19]{holroyd_propp}, it suffices to prove
\eqref{eq:ldev} for $\T^H=\{x\in\T:|x|\leq H\}$, with $H>M(n)$, i.e.,
\begin{equation}\label{eq:esc_truncated}
 E_n(\T^H,S^H,\rho^H)\geq \# X.
\end{equation}
Here, $E_n(\T^H,S^H,\rho^H)$ represents the number of particles that stop at $S^H=\{x\in\T:|x|=H\}$ when we start $n$ rotor-router walks at the root $o$ of
$\T$ and rotor configuration $\rho^H$ (the restriction of $\rho$ on $\T^H$).
In $\T^S$, where recall that $\T^S$ represents the tree $\T$ truncated at the frontier $S$, start $n$ particles at $o$, and stop them when they either reach $S$ or return to $s$. The vertices
at distance greater than $M(n)$ were never visited, and the rotors there are in their initial random configuration.
Now for every vertex $x$ in $X$ restart one particle. 
Since there is a live path at $x$ the particle will reach the level $H$ without leaving the cone of $x$, at which point the particle is stopped again.
Hence if we restart all particles which are located in $F_{\rho}(n)$ at least $\# X$ of them will reach
level $H$ before returning to the root. Because of the Abelian property of rotor-router walks,
\eqref{eq:esc_truncated} follows, therefore also \eqref{eq:ldev}.

The random variable $\# X$ can be written as a sum of i.i.d. Bernoulli variables
in the following way. Consider the events
\begin{equation*}
A_x = \big[\text{The tree of good children with root $x$ is infinite}\big],
\end{equation*}
for all $x\in F_{\rho}(n)$. Then for all $x\in F_{\rho}(n)$, by construction of the
frontier the rotor configuration on the subtree rooted at $x$ is unchanged.
Thus the tree of good children rooted at $x$ is a Galton-Watson tree with offspring
distribution $\nu$. By assumption $\E[\nu]>1$, therefore $\P[A_x] = \mathsf{p}$.
Moreover the event $A_x$ depends only on the subtree rooted at $x$, hence the
events $(A_x)_{x\in F_{\rho}(n)}$ are independent. Let 
$F_\rho(n) = \big\{y_1,y_2,\ldots,y_{\#F_{\rho}(n)}\big\}$ and let $(Y'_i)_{i\geq 1}$ be a sequence of i.i.d. Bernoulli($\mathsf{p}$) random
variables, which are independent of $\T$ and $\rho$. Define
\begin{equation*}
Y_i = \begin{cases}
\indicator{A_{y_i}} & \text{ for } i \leq \# F_{\rho}(n), \\
Y'_i         & \text{ for } i > \# F_{\rho}(n).
\end{cases}
\end{equation*} 
Then we have that the $(Y_i)_{i\geq 1}$ are i.i.d. Bernoulli($\mathsf{p}$), and
\begin{equation*}
\# X = \sum_{i=1}^{\#F_{\rho}(n)} Y_i.
\end{equation*}
By Corollary \ref{cor:down_sink} and Theorem \ref{gw_anchored_expansion} there exists a positive random variable $\kappa$ and a constant $n_0$ such that 
\begin{align*}
\P[\#F_{\rho}(n)>\kappa n, \text{ for all } n\geq n_0] = 1.
\end{align*}
Since $\kappa$ is $\rgw$-a.s.~positive, for all $\epsilon>0$ we can choose a constant $\alpha(\epsilon) > 0$ such that
$\P[\kappa > \alpha(\epsilon) ] \geq 1-\epsilon$. 
Consider the two events 
\begin{align*}
B_{\kappa} = [\#F_{\rho}(n)>\kappa n, \text{ for all } n\geq n_0]
 \quad\text{and}\quad
B_{\alpha(\epsilon) } = [\#F_{\rho}(n)>\alpha(\epsilon)  n, \text{ for all } n\geq n_0].
\end{align*}
By the choice of $\alpha(\epsilon) $ we have
%\begin{equation*}
%1=\P[B_{\kappa}] = 
%\P[B_{\kappa}, \kappa > \alpha(\epsilon) ] +
%\P[B_{\kappa}, \kappa \leq \alpha(\epsilon) ] 
%\leq \P[B_{\alpha(\epsilon) }] + \P[\kappa \leq \alpha(\epsilon) ],
%\end{equation*}
$\P[B_{\alpha(\epsilon)} ] \geq 1-\epsilon$. For any positive constant $\delta$ we have 
\begin{align*}
\begin{aligned}
\P\big[E_n(\T,\rho) <\delta n, B_{\alpha(\epsilon)} \big]  &\leq \P\big[\#X< \delta n, B_{\alpha(\epsilon)} \big]
= \P\left[\sum_{i=1}^{\# F_{\rho}(n)} Y_i < \delta n, B_{\alpha(\epsilon)} \right] \\
&\leq \P\left[\sum_{i=1}^{\alpha(\epsilon)  n} Y_i < \delta n\right] = \P\big[\widetilde{Y} < \delta n\big],
\end{aligned}
\end{align*}
where $\widetilde{Y}=\sum_{i=1}^{\alpha(\epsilon)  n}Y_i$ and $\E[\widetilde{Y}] = \alpha(\epsilon) \mathsf{p}n$.
Using the standard Chernoff bound
\begin{align*}
\P\big[E_n(\T,\rho) <\delta n, B_{\alpha(\epsilon)} \big] &\leq \P\left[\widetilde{Y} < \frac{\delta}{\alpha(\epsilon) \mathsf{p}}\E[\widetilde{Y}]\right] \leq
\exp\left\{-\frac{1}{2}\big(1-\tfrac{\delta}{\alpha(\epsilon) \mathsf{p}}\big)^2 \E[\widetilde{Y}]\right\} \\
&\leq \exp\left\{-\frac{1}{2}\big(1-\tfrac{\delta}{\alpha(\epsilon) \mathsf{p}}\big)^2 \alpha(\epsilon) \mathsf{p}n\right\}.
\end{align*}
Thus for $\delta_{\epsilon} < \alpha(\epsilon) \mathsf{p}$ there exists a constant $c_{\epsilon}>0$, such that, 
$\P\big[E_n(\T,\rho) <\delta_{\epsilon} n, B_{\alpha(\epsilon)} \big] < e^{-c_{\epsilon}n}$, which is summable. Hence using the Borel-Cantelli Lemma
\begin{align}\label{eq:limsup0}
\P\bigg[\limsup_{n\to\infty}\Big((E_n(\T,\rho) <\delta_{\epsilon} n) \cap B_{\alpha(\epsilon)}  \Big)\bigg] = 0.
\end{align}
Since the event $B_{\alpha(\epsilon) }$ does not depend on $n$, we have
\begin{equation*}
\limsup_{n\to\infty}\Big((E_n(\T,\rho) <\delta_{\epsilon} n) \cap B_{\alpha(\epsilon)}  \Big) =
\bigg(\limsup_{n\to\infty}\big(E_n(\T,\rho) <\delta_{\epsilon} n\big)\bigg) \cap B_{\alpha(\epsilon)}.
\end{equation*}
Thus taking the complement in \eqref{eq:limsup0} and applying the union bound gives

%and taking the complement gives
%\begin{align*}
%1 = \P\left[\bigg(\liminf_{n\to\infty}\big(E_n(\T,\rho) \geq \delta_{\epsilon} n\big)\bigg) \cup B_{\alpha(\epsilon) }^c\right] \leq 
%\P\left[\liminf_{n\to\infty}\frac{E_n(\T,\rho)}{n} \geq \delta_{\epsilon}\right] + \P\big[B_{\alpha}^c\big].
%\end{align*}
%It follows that
\begin{equation*}
\P\left[\liminf_{n\to\infty}\frac{E_n(\T,\rho)}{n} \geq \delta_{\epsilon}\right] \geq 1-\epsilon.
\end{equation*}
Since $\epsilon$ is arbitrary, the claim follows.
\end{proof}

Recall that $\gamma(\T)$ is the probability that simple random walk started at the root $o$  of $\T$  will never visit $s$, the parent of the root $o$.
The next step is to find the law of $\gamma(\T)$. From \cite[Equation (4.1)]{lyons_pemantle_peres_1997_2},
we have $\gw$-almost surely 
\begin{equation}\label{gw_escape_prob}
 \gamma(\T) = 1 - \frac{1}{1+\sum_{j=1}^{\d_o} \gamma(\T_j)},
\end{equation}
where $\d_o=\d_o(\T)$ represents the (random) degree of the root $o$ of $\T$, which has the same distribution as the offspring distribution $\xi$.

Denote by $F_{\gamma}$ the cumulative distribution function (c.d.f.)~of $\gamma(\T)$ and by $F_l$ the c.d.f. of $l(\T,\rho)$. We always assume that the two random variables $\gamma(\T)$ and $l(\T,\rho)$ are defined on the same probability space with probability measure $\rgw$.
Next, we want to prove that the random variable $l(\T,\rho)$ stochastically dominates $\gamma(\T)$, i.e.~$F_{l}(t)\leq F_{\gamma}(t)~\forall t\in\R$.

The recursive structure of Galton-Watson trees with offspring distribution $\xi$ and $\P[\d_o=k]=\P[\xi=k]=p_k$
gives that $F_{\gamma}$ satisfies
\begin{equation}\label{eq:cdf_ret_prob}
 F_{\gamma}(t)=
 \begin{cases}
  0, & \text{ if } t\leq 0\\
  \sum_{k=0}^{\infty}p_k F^{\star k}_{\gamma}\big(\frac{t}{1-t}\big), & \text{ if } 0 < t < 1\\
  1, & \text{ if } t \geq 1,
 \end{cases}
\end{equation}
where $F^{\star k}$ represents the $k$-th convolution power of $F$. Moreover, for $F_{l}$ we have by \eqref{eq:l_iteration2}
\begin{equation}\label{eq:cdf_l_inf}
 F_{l}(t)\leq \sum_{k=0}^{\infty}p_k F^{\star k}_{l}\Big(\frac{t}{1-t}\Big), \quad \text{ if } t\in (0,1).
\end{equation}
We shall use \cite[Theorem 4.1]{lyons_pemantle_peres_1997_2}, which we state here
in the version found in \cite{LP:book}.
\begin{theorem}[Theorem 16.32, \cite{LP:book}]\label{thm:operator_cdf}
The functional equation \eqref{eq:cdf_ret_prob} has a unique solution, $F_{\gamma}$. Define the operator on c.d.f.'s
\begin{equation*}
 \mathcal{K}:F \mapsto \sum_{k=0}^{\infty}p_k F^{\star k}\Big(\frac{t}{1-t}\Big),\quad
 \text{ with } t\in (0,1).
\end{equation*}
For any initial c.d.f.~$F$ with $F(0)=0$ and $F(1)=1$, we have weak convergence under iteration to $F_{\gamma}$:
\begin{equation*}
 \lim_{n\to\infty}\mathcal{K}^n(F)=F_{\gamma}.
\end{equation*} 
\end{theorem}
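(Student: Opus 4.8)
The plan is to treat $\mathcal{K}$ as a monotone operator on the lattice of cumulative distribution functions supported on $[0,1]$, ordered pointwise, and to combine a monotone-iteration argument with the probabilistic meaning of the iterates. The starting observation is that one application of $\mathcal{K}$ is exactly the operation of peeling off one generation of the tree: if $\gamma_1,\dots,\gamma_k$ are independent escape probabilities with common c.d.f.\ $F$ and the root has $k$ children with probability $p_k$, then $\phi(\gamma_1+\dots+\gamma_k)$ with $\phi(s)=s/(1+s)$ has c.d.f.\ $\mathcal{K}(F)$, since $\phi$ is increasing with inverse $t\mapsto t/(1-t)$. Consequently $\mathcal{K}^n(F)$ is the law of the escape probability of the tree truncated at level $n$ when the subtrees hanging below level $n$ are replaced by black boxes whose escape probabilities are i.i.d.\ with law $F$. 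Existence of a fixed point is then immediate: by the recursion \eqref{gw_escape_prob}, which holds $\gw$-a.s., the true law $F_{\gamma}$ of $\gamma(\T)$ solves \eqref{eq:cdf_ret_prob}, i.e.\ $\mathcal{K}(F_{\gamma})=F_{\gamma}$.

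Next I would record two structural facts. First, $\mathcal{K}$ is monotone: convolution preserves stochastic order and $t\mapsto t/(1-t)$ is increasing, so $F\le G$ pointwise implies $\mathcal{K}(F)\le\mathcal{K}(G)$ pointwise. Second, $\mathcal{K}$ is continuous under weak convergence (convolutions of weakly convergent c.d.f.'s converge, and the sum over $k$ is controlled by dominated convergence since $\sum_k p_k=1$), so monotone limits of iterates are themselves fixed points. Using the standing assumption $p_0=0$ one checks that $\mathcal{K}$ preserves the boundary conditions $F(0)=0$ and $F(1)=1$: the only term that could create an atom at $0$ is the $k=0$ term, which is absent.

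With these in hand the convergence from below comes for free. Let $L$ be the c.d.f.\ of the point mass at $1$, the pointwise-smallest c.d.f.\ on $[0,1]$. Then $\mathcal{K}(L)\ge L$, so $\mathcal{K}^n(L)$ increases to a fixed point $\underline{F}$ with $\underline{F}(0)=0$; probabilistically $\mathcal{K}^n(L)$ is the law of the probability of reaching level $n$ before returning to $s$, and these events decrease to the event of never hitting $s$, so $\mathcal{K}^n(L)$ converges weakly to the law of $\gamma(\T)$ and $\underline{F}=F_{\gamma}$. Since $L\le F$ for every admissible $F$, monotonicity gives $\liminf_n \mathcal{K}^n(F)(t)\ge F_{\gamma}(t)$ for all $t$, which is one of the two inequalities needed for $\mathcal{K}^n(F)\to F_{\gamma}$.

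The hard part will be the matching upper bound, equivalently the uniqueness of the nontrivial fixed point. The obstruction is genuine: the point mass at $0$ is always a fixed point of $\mathcal{K}$, and in the transient regime $\E[\xi]>1$ it is distinct from $F_{\gamma}$; this is exactly why the hypothesis $F(0)=0$ cannot be dropped. The mechanism I would exploit is that this trivial fixed point is repelling: linearizing the scalar map $s\mapsto\phi\big(\sum_{j=1}^{\d_o}s_j\big)$ at $0$ produces the multiplier $m=\E[\xi]>1$, so any mass kept strictly away from $0$ is driven toward the stable fixed point $F_{\gamma}$. To turn this into $\limsup_n \mathcal{K}^n(F)(t)\le F_{\gamma}(t)$, given $\eta>0$ I would use $F(0)=0$ to pick $\delta>0$ with $F(\delta)<\eta$, dominate $F$ in the pointwise c.d.f.\ order by a law concentrated on $[\delta,1]$ up to an $\eta$-fraction of mass, and show that $\mathcal{K}^n$ applied to a starting law bounded below by $\delta$ also converges to $F_{\gamma}$; the instability at $0$ ensures the $\delta$-floor disappears in the limit, while the $\eta$-fraction is removed by letting $\eta\to0$. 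Combining the two bounds yields weak convergence $\mathcal{K}^n(F)\to F_{\gamma}$ for every admissible $F$, and uniqueness of the solution of \eqref{eq:cdf_ret_prob} among c.d.f.'s with no atom at $0$ follows by applying the convergence to a putative second fixed point. I expect the delicate step to be making the ``repelling at $0$'' statement rigorous at the level of distributions rather than the scalar recursion; the cleanest route is the probabilistic comparison of truncated escape probabilities under different boundary laws, invoking the $\gw$-a.s.\ positivity of $\gamma(\T)$ on the survival event. (The critical case $m=1$ is easier, since there $F_{\gamma}$ is itself the point mass at $0$ and is attracting.)
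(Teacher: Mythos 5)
This statement is not proved in the paper at all: it is quoted verbatim from the literature (Theorem~16.32 of [LP15], originally Theorem~4.1 of Lyons--Pemantle--Peres), so there is no in-paper proof to compare against. Judged on its own terms, your proposal correctly sets up the right framework and completely handles one half of the claim: the identification of $\mathcal{K}$ with peeling off a generation, the verification that $F_{\gamma}$ is a fixed point via \eqref{gw_escape_prob}, the monotonicity and weak continuity of $\mathcal{K}$, and the monotone iteration from the minimal c.d.f.\ $L=\indicator{[1,\infty)}$, whose iterates you rightly identify as the laws of $\P_o[\text{reach level } n \text{ before } s]$ decreasing to $\gamma(\T)$. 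This yields $\liminf_n\mathcal{K}^n(F)\geq F_{\gamma}$ for every admissible $F$, and is sound.

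The other half --- $\limsup_n \mathcal{K}^n(F)\leq F_{\gamma}$, equivalently uniqueness of the fixed point among c.d.f.'s with $F(0)=0$ --- is where the entire difficulty of the theorem lives, and your proposal does not close it; you say yourself that you ``expect the delicate step to be making the repelling-at-$0$ statement rigorous,'' which is an acknowledgement of the gap rather than a proof. Concretely, the strategy of dominating $F$ pointwise by $G_{\eta}=\eta\,\delta_0+(1-\eta)(\text{law on }[\delta,1])$ runs into two problems. First, the monotone-iteration machinery that made the lower bound free is unavailable here: there is no reason for $\mathcal{K}^{n}(G_{\eta})$ to be monotone in $n$, so you cannot conclude that its limit points are fixed points, nor that a limit exists. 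Second, even granting that the atom at $0$ of $\mathcal{K}^{n}(G_{\eta})$ equals $f^{(n)}(\eta)\to 0$ (with $f$ the offspring generating function, using $p_0=0$ and supercriticality), this controls only the mass \emph{at} $0$; the claim you need is that the whole distribution is asymptotically pushed up to $F_{\gamma}$, and the scalar linearization ``multiplier $m>1$'' does not by itself yield such a distributional statement --- the operator is nonlinear and the number of summands in $F^{\star k}$ grows with the tree. Some genuinely new input (a coupling of the truncated escape probabilities under the two boundary laws, a contraction estimate in a suitable metric on laws bounded away from $0$, or the specific argument of Lyons--Pemantle--Peres) is required, and the proposal only gestures at it. As written, the argument proves existence of $F_\gamma$ as a solution and one of the two inequalities, but not the uniqueness or the convergence claim.
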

\begin{remark}\label{rem:ineq_oper_k}
Let $F_{1}$ and $F_{2}$ be two c.d.f.~with $F_1\leq F_2$, then
\begin{equation*}
\mathcal{K}^n(F_1)\leq \mathcal{K}^n(F_2) \quad \forall n\in \N. 
\end{equation*} This fact can be seen directly or can be found in the proof of \cite[Theorem 4.1]{lyons_pemantle_peres_1997_2}.
\end{remark}
\begin{lemma}\label{lem:k_fl}
For all $n\geq 1$, we have that $ F_l \leq \mathcal{K}^n(F_l)$.
\end{lemma}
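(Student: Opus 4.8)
The plan is to prove the inequality by induction on $n$, using only the sub-fixed-point inequality \eqref{eq:cdf_l_inf} as the base case and the monotonicity of the operator $\mathcal{K}$ recorded in Remark \ref{rem:ineq_oper_k} as the engine for the inductive step. Once this lemma is in place, combining it with Theorem \ref{thm:operator_cdf} (so that $\mathcal{K}^n(F_l)\to F_\gamma$) will yield the desired stochastic domination $F_l\leq F_\gamma$ by passing to the limit.

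First I would settle the base case $n=1$, i.e.\ $F_l\leq \mathcal{K}(F_l)$ as an inequality of c.d.f.'s on all of $\R$. On the interval $(0,1)$ this is precisely \eqref{eq:cdf_l_inf}. Outside $(0,1)$ it holds trivially: since $0\leq E_n(\T,\rho)/n\leq 1$ for every $n$, the random variable $l(\T,\rho)$ takes values in $[0,1]$, so $F_l(t)=\mathcal{K}(F_l)(t)=1$ for $t\geq 1$ and $F_l(t)=0$ for $t<0$; at $t=0$ I invoke Proposition \ref{cor:l_delta}, which gives $l(\T,\rho)>0$ $\rgw$-almost surely and hence $F_l(0)=\P[l(\T,\rho)\leq 0]=0=\mathcal{K}(F_l)(0)$. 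In particular $F_l$ is a genuine c.d.f.\ with $F_l(0)=0$ and $F_l(1)=1$, so it is a legitimate input for $\mathcal{K}$ and for the comparison in Remark \ref{rem:ineq_oper_k}.

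For the inductive step I would argue that the sequence $\big(\mathcal{K}^{n}(F_l)\big)_{n\geq 0}$ is pointwise non-decreasing. Indeed, applying the monotone map $\mathcal{K}^{n}$ (monotonicity from Remark \ref{rem:ineq_oper_k}) to the base-case inequality $F_l\leq \mathcal{K}(F_l)$ yields $\mathcal{K}^{n}(F_l)\leq \mathcal{K}^{n}\big(\mathcal{K}(F_l)\big)=\mathcal{K}^{n+1}(F_l)$ for every $n\geq 0$. Chaining these inequalities from $n=0$ gives $F_l=\mathcal{K}^{0}(F_l)\leq \mathcal{K}(F_l)\leq\cdots\leq \mathcal{K}^{n}(F_l)$ for all $n\geq 1$, which is exactly the claim.

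I expect no genuine difficulty here: the substance is entirely contained in the already-established inequality \eqref{eq:cdf_l_inf} and in the monotonicity statement of Remark \ref{rem:ineq_oper_k}, and the argument is the standard one that iterating a monotone operator on a subsolution produces an increasing sequence. The only point requiring care is the treatment of the endpoints of $[0,1]$ — in particular checking $F_l(0)=0$, for which I would lean on Proposition \ref{cor:l_delta} — so that $F_l$ qualifies as an admissible c.d.f.\ and the operator $\mathcal{K}$ together with its iterates can be applied and compared termwise.
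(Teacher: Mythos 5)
Your proof is correct and follows essentially the same route as the paper: the base case $F_l\leq\mathcal{K}(F_l)$ from \eqref{eq:cdf_l_inf}, then applying the monotonicity of $\mathcal{K}$ from Remark \ref{rem:ineq_oper_k} to chain $\mathcal{K}^n(F_l)\leq\mathcal{K}^{n+1}(F_l)$ by induction. Your extra care at the endpoints of $[0,1]$ (in particular invoking Proposition \ref{cor:l_delta} for $F_l(0)=0$) is a welcome addition that the paper only makes explicit later, in the proof of Lemma \ref{lem:l_dom_stoc_e}.
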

\begin{proof}
 For $n=1$, we have $F_l \leq \mathcal{K}(F_l)$ which holds by \eqref{eq:cdf_l_inf}
 and by the definition of the operator $\mathcal{K}$ in Theorem \ref{thm:operator_cdf}.
 For each $n> 1$, using Remark \ref{rem:ineq_oper_k} for the c.d.f.'s $F_l \leq \mathcal{K}(F_l)$
 we get $\mathcal{K}^n(F_l)\leq \mathcal{K}^{n+1}(F_l)$. The claim follows then easily by induction.
\end{proof}

\begin{lemma}\label{lem:l_dom_stoc_e}
Suppose $\E[\nu]>1$. We have that $F_{l}\leq F_{\gamma}$.
\end{lemma}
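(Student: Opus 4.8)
The plan is to show $F_l \le F_\gamma$ by combining the fixed-point characterization of $F_\gamma$ (Theorem \ref{thm:operator_cdf}) with the fact that $F_l$ is a subsolution of the same functional equation. The operator $\mathcal{K}$ contracts any admissible c.d.f.\ onto the unique solution $F_\gamma$ under iteration, and $F_l$ lies below its own image under $\mathcal{K}$, so iterating $\mathcal{K}$ on $F_l$ produces an increasing sequence of c.d.f.'s that pushes $F_l$ \emph{up} toward $F_\gamma$. Since the limit is $F_\gamma$ and the sequence never drops below $F_l$, the desired domination should fall out in the limit.

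First I would record that $F_l$ is an admissible initial c.d.f.\ for the operator $\mathcal{K}$, i.e.\ $F_l(0)=0$ and $F_l(1)=1$. The condition $F_l(1)=1$ holds because $l(\T,\rho)\in[0,1]$ by definition, and $F_l(0)=0$ requires $l(\T,\rho)>0$ $\rgw$-almost surely, which is exactly the content of Proposition \ref{cor:l_delta} under the standing hypothesis $\E[\nu]>1$. This is where the assumption $\E[\nu]>1$ enters the argument, and it is the substantive input: without strict positivity of $l(\T,\rho)$ the initial c.d.f.\ would have an atom at $0$ and the convergence theorem would not apply.

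Next I would invoke Lemma \ref{lem:k_fl}, which gives $F_l \le \mathcal{K}^n(F_l)$ for every $n\ge 1$. Taking $n\to\infty$ and applying Theorem \ref{thm:operator_cdf} with the admissible initial c.d.f.\ $F_l$ yields $\mathcal{K}^n(F_l)\to F_\gamma$ weakly. The inequality $F_l \le \mathcal{K}^n(F_l)$ holds pointwise for each fixed $t$, so it is preserved at every point of continuity of the limit $F_\gamma$; since c.d.f.'s are right-continuous and $F_\gamma$ has at most countably many discontinuities, passing to the limit gives $F_l(t)\le F_\gamma(t)$ first at all continuity points and then, by right-continuity, at every $t\in\R$. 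This establishes $F_l\le F_\gamma$, which is the stochastic domination claimed.

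The main obstacle is the passage to the limit, which must be handled with a little care because weak convergence controls the c.d.f.'s only at continuity points of the limit, whereas the inequality to be proved is for all $t$. The resolution is standard—prove it at continuity points and extend by right-continuity—but it is the one place where one cannot simply take limits blindly. Everything else is bookkeeping: verifying admissibility of $F_l$ (via Proposition \ref{cor:l_delta}) and citing the monotone convergence of the iterates (Lemma \ref{lem:k_fl} together with Theorem \ref{thm:operator_cdf}).
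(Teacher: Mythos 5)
Your proposal follows the paper's proof exactly: establish $F_l(0)=0$ via Proposition \ref{cor:l_delta}, apply Theorem \ref{thm:operator_cdf} to get $\mathcal{K}^n(F_l)\to F_\gamma$, and conclude from $F_l\leq \mathcal{K}^n(F_l)$ (Lemma \ref{lem:k_fl}). Your extra care about passing to the limit only at continuity points of $F_\gamma$ and extending by right-continuity is a detail the paper leaves implicit, but it is the same argument.
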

\begin{proof}
By Proposition \ref{cor:l_delta}, $F_l(0)=0$. Hence, applying Theorem \ref{thm:operator_cdf} to $F_l$ gives 
\begin{equation*}
\lim_{n\to\infty}\mathcal{K}^n(F_l)=F_{\gamma}.
\end{equation*}
The claim follows now by Lemma \ref{lem:k_fl}.
%\begin{equation*}
%F_l\leq\lim_{n\to\infty}\mathcal{K}^n(F_l).\qedhere
%\end{equation*}
\end{proof}

We are finally able to prove the transient part of Theorem \ref{main_thm}.

\begin{proof}[Proof of Theorem \ref{main_thm}\eqref{main_thm_b}]
From Lemma \ref{lem:l_dom_stoc_e}, we have $F_{l}\leq F_{\gamma}$.
Theorem \ref{thm:schramm_thm} implies that 
\begin{equation*}
l(\T,\rho)=\liminf_{n\to\infty}\frac{E_n(\T,\rho)}{n}\leq \limsup_{n\to\infty}\frac{E_n(\T,\rho)}{n}\leq\gamma(\T),\ \rgw\text{-a.s.}
\end{equation*}
Putting both parts together we get that
\begin{equation*}
F_{\gamma}\leq F_{\overline{l}}\leq F_{l}\leq F_{\gamma}
\end{equation*}
where $F_{\overline l}$ is the c.d.f.~of $\overline{l}(\T,\rho)=\limsup_{n\to\infty}\frac{E_n(\T,\rho)}{n}$. It holds 
$l(\T,\rho)\leq \overline{l}(\T,\rho)$, $\rgw$-a.s.~and $F_{\overline l}=F_{{l}}$, therefore 
$$\E[\overline l(\T,\rho) -l(\T,\rho)]=0.$$ Now, $\overline l(\T,\rho)-l(\T,\rho)$ is $\rgw$-a.s.~positive. The expectation of a nonnegative random variable can be zero only if it is  a.s.~zero, hence $\overline l(\T,\rho)=l(\T,\rho)$  $\rgw$-a.s. 
Therefore,  the limit $L(\T,\rho) = \lim_{n\to\infty} \frac{E_n(\T,\rho)}{n}$ exists $\rgw$-a.s. Since $L(\T,\rho)=\gamma(\T)$ in distribution and $L(\T,\rho)\leq \gamma(\T)$, $\rgw$-a.s.,
we have
\begin{equation*}
 \lim_{n\to\infty} \frac{E_n(\T,\rho)}{n}=\gamma(\T),\ \rgw\text{-almost surely},
\end{equation*}
which proves the transient part.
\end{proof}

\paragraph{Acknowledgements.}
The authors would like to thank Walter Hochfellner for performing simulations of rotor-router walks on
Galton-Watson trees. We also thank the anonymous referees for carefully reading the first version and for suggesting
various improvements.
The research of Wilfried Huss was supported by the Austrian Science Fund (FWF): P24028-N18
and by the Erwin-Schr\"{o}dinger scholarship (FWF): J3628-N26. The research of Ecaterina Sava-Huss was supported by the Erwin-Schr\"{o}dinger scholarship (FWF): J3575-N26.
Moreover, all three authors were supported by the exchange program Amadeus between Graz and Marseille,
project number FR 11/2014.

%=====================================================================
%=================BIBLIOGRAPHY========================================

\bibliographystyle{mypaperhep}
\bibliography{rotor_walks}

\end{document}